\newtheorem{theorem}{Theorem}[section]
\newtheorem{lemma}[theorem]{Lemma}
\newcommand{\R}{{\mathbb R}}
\newcommand{\Z}{{\mathbb Z}}
\newcommand{\T}{{\mathbb T}}
\newcounter{todo}
\newcommand{\phy}{\varphi}
\newcommand{\DD}{\mathrm{d}}
\renewcommand{\geq}{\geqslant}
\renewcommand{\leq}{\leqslant}
\newcommand{\abs}[1]{\left|#1\right|}
\newcommand{\deriv}[2]{\frac{\partial #1}{\partial #2}}
\newenvironment{remark}{\refstepcounter{theorem}\par\medskip\noindent{\bf
Remark~\thetheorem.}}{\unskip\nobreak\hfill\hbox{ $\oslash$}\par\bigskip}
\newenvironment{question}{\refstepcounter{theorem}\par\medskip\noindent{\bf
Question~\thetheorem}}{\unskip\normalfont \unskip\nobreak\hfill\hbox{\bigskip}}
\newenvironment{definition}{\refstepcounter{theorem}\par\medskip\noindent{\bf
Definition~\thetheorem.}}
\begin{document}

\title{The Hofer question on intermediate symplectic capacities}

\author{\'Alvaro
  Pelayo\,\,\,\,\,\,\,\,\,\,\,\,\,\,\,\,\,\,\,\,\,\,\,\,\,\,San V\~{u}
  Ng\d{o}c}

\maketitle

\begin{abstract}
  Roughly twenty five years ago Hofer asked: \emph{can the cylinder
    ${\rm B}^2(1) \times \mathbb{R}^{2(n-1)}$ be symplectically
    embedded into ${\rm B}^{2(n-1)}(R) \times \mathbb{R}^2$ for some
    $R>0$?} We show that this is the case if $R \geq
  \sqrt{2^{n-1}+2^{n-2}-2}$. We deduce that there are no intermediate
  capacities, between $1$\--capacities, first constructed by Gromov in
  1985, and $n$\--capacities, answering another question of Hofer.  In
  2008, Guth reached the same conclusion under the additional
  hypothesis that the intermediate capacities should satisfy the
  \emph{exhaustion property}.
   \end{abstract}

\section{\textcolor{black}{Introduction}} \label{sec:intro} 
A \emph{symplectic manifold} is a pair $(M,\omega)$ consisting of a 
$2n$\--dimensional ${\rm C}^{\infty}$\--smooth manifold $M$ and a 
\emph{symplectic form} $\omega$, that is, a non\--degenerate closed differential $2$\--form on $M$. For instance, any open subset of $\mathbb{R}^{2n}$ equipped with the
$2$\--form $\omega_0=\sum_{i=1}^{n} {\rm d}x_i \wedge {\rm d}y_i,$
where $(x_1,y_1,\ldots,x_n,y_n)$ denote the coordinates in
$\mathbb{R}^{2n}$, is a symplectic manifold.  If $U$ and $V$ are open
subsets of $\mathbb{R}^{2n}$, a \emph{symplectic embedding} $f \colon U
\to V$ is a smooth embedding such that $f^*\omega_0=\omega_0.$ 
In particular, ${\rm volume}(U) \leq {\rm volume}(V)$.

Let ${\rm B}^{2n}(R)$ denote the open ball of radius $R$ in
$\mathbb{R}^{2n}$, where $R>0$, that is, the set of points
$(x_1,y_1\ldots,x_n,y_n) \in \mathbb{R}^{2n}$ such that $\sum_{i=1}^n
(x_i)^2+(y_i)^2<R^2$.  Gromov's Nonsqueezing\footnote{Many contributions concerning symplectic embeddings followed Gromov's
work, see e.g. Biran \cite{Biran1999, B1, B2}, Ekeland\--Hofer
\cite{EkHo1989}, Floer\--Hofer\--Wyscoki \cite{FlHoWy1994}, Hofer
\cite{Hofer1990}, Lalonde\--Pinsonnault \cite{LP}, McDuff \cite{M2,
  Mc2011}, McDuff\--Polterovich \cite{McPo1994},  
  McDuff\--Schlenk \cite{McSc2012}, and Traynor \cite{Tr1995}.} Theorem \cite{G} states
that there is no symplectic embedding of ${\rm B}^{2n}(1)$ into the
cylinder ${\rm B}^2(R) \times \mathbb{R}^{2(n-1)}$ for $R<1$. \footnote{It is
considered one of the most fundamental results in symplectic topology.
In particular,  it may be used to derive the Eliashberg\--Gromov
Rigidity Theorem   (the theorem says that 
the symplectomorphism group of a manifold is ${\rm C}^0$\--closed
in the diffeomorphism group).}

Coming from the variational theory of Hamiltonian dynamics, Ekeland
and Hofer gave a proof of Gromov's Nonsqueezing Theorem by studying
periodic solutions of Hamiltonian systems.

\subsection{Embeddings} \label{sec:se}

Hofer asked \cite[page~17]{Hofer1990}: is there  $R>0$,
such that the cylinder ${\rm B}^2(1) \times \mathbb{R}^{2(n-1)}$ symplectically embeds into ${\rm B}^{2(n-1)}(R)
\times \mathbb{R}^2$?

\begin{theorem}\label{first} 
  If $n\geq 2$, the cylinder ${\rm B}^2(1) \times \mathbb{R}^{2(n-1)}$
  may be symplectically embedded into the product ${\rm B}^{2(n-1)}(R)
  \times \mathbb{R}^2$ for all $R \geq \sqrt{2^{n-1}+2^{n-2}-2}$.
\end{theorem}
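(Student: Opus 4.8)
\emph{Proof plan.}
The plan is to deduce Theorem~\ref{first} from a single ``one-step'' embedding that trades one $\mathbb{R}^2$ factor for one extra complex dimension of the ball, at the price of enlarging the radius, and then to iterate it. Put $R_k^2:=2^{k-1}+2^{k-2}-2$ (so that $R_2=1$ and $R_{k+1}^2=2R_k^2+2$) and isolate the following claim $(\star)$: for all $m\geq 1$ and $a>0$ there is a symplectic embedding
\[
  {\rm B}^{2m}(a)\times\mathbb{R}^{4}\ \hookrightarrow\ {\rm B}^{2m+2}\bigl(\sqrt{2a^{2}+2}\,\bigr)\times\mathbb{R}^{2}.
\]
Granting $(\star)$, Theorem~\ref{first} follows by induction on $n$. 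For $n=2$ one has $R_2=1$, and the identity of ${\rm B}^2(1)\times\mathbb{R}^2$ does the job. For $n\geq 3$, write ${\rm B}^2(1)\times\mathbb{R}^{2(n-1)}=\bigl({\rm B}^2(1)\times\mathbb{R}^{2(n-2)}\bigr)\times\mathbb{R}^{2}$; by the inductive hypothesis the first factor embeds in ${\rm B}^{2(n-2)}(R_{n-1})\times\mathbb{R}^{2}$, hence ${\rm B}^2(1)\times\mathbb{R}^{2(n-1)}$ embeds in ${\rm B}^{2(n-2)}(R_{n-1})\times\mathbb{R}^{4}$; now apply $(\star)$ with $m=n-2$ and $a=R_{n-1}$. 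Since $2R_{n-1}^2+2=R_n^2$, the target is ${\rm B}^{2(n-1)}(R_n)\times\mathbb{R}^2$, which sits inside ${\rm B}^{2(n-1)}(R)\times\mathbb{R}^2$ for every $R\geq R_n$. Concretely, this amounts to peeling off pairs of $\mathbb{R}^2$ factors and applying $(\star)$ exactly $n-2$ times, the radius evolving by $a^2\mapsto 2a^2+2$ from the initial value $a=1$.

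The real content is $(\star)$, and it cannot be obtained by keeping ${\rm B}^{2m}(a)$ rigidly inside the larger ball and squeezing $\mathbb{R}^4$ into the leftover room ${\rm B}^2(\sqrt{a^2+2}\,)\times\mathbb{R}^2$: that would force $\{{\rm pt}\}\times\mathbb{R}^4$ to embed into ${\rm B}^2(\sqrt{a^2+2}\,)\times\mathbb{R}^2$, which Gromov's Nonsqueezing Theorem forbids. So the embedding must mix the ball coordinates with the new ones, and I would build it as an explicit ``infinite symplectic folding''. Denote by $z_{m+1}$ the new complex coordinate of the target ball; along the image of ${\rm B}^{2m}(a)$ it has room $|z_{m+1}|^2<2a^2+2-|z|^2$, always at least $a^2+2$. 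Using that $\mathbb{R}^2$ is symplectomorphic to an open strip $\mathbb{R}\times(0,\ell)$ of arbitrarily small thickness $\ell$, I would first replace one of the two $\mathbb{R}^2$ factors by such a strip, so that the piece to be folded has a single unbounded direction; then wrap that direction around the $z_{m+1}$-disk in symplectic polar coordinates $(\varphi,\rho)$, using the second $\mathbb{R}^2$ as a ``parking lot'' that both records the winding number (keeping the map injective) and receives the overflow, while simultaneously shearing the coordinates $z$ of ${\rm B}^{2m}(a)$ so that $|z|^2+|z_{m+1}|^2<2a^2+2$ holds everywhere. A bookkeeping of areas shows the $z_{m+1}$-disk needs capacity roughly $\pi\cdot 2$ to accommodate one full turn together with the strip thickness, plus $\pi a^2$ more to trade against the radius of ${\rm B}^{2m}(a)$; this accounts for the bound $2a^2+2$.

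The hard part is precisely this folding map: writing it down, and then verifying that it is injective on all of ${\rm B}^{2m}(a)\times\mathbb{R}^4$, that it pulls back $\omega_0$ to $\omega_0$, and that it is proper onto its image --- all the while the sheared ball coordinates continue to satisfy the defining inequality of ${\rm B}^{2m+2}(\sqrt{2a^2+2}\,)$. The coupling between the shear of the ball coordinates and the winding recorded in the parking-lot plane is the delicate point; the induction, the identity $2R_{n-1}^2+2=R_n^2$, and the base case are all routine.
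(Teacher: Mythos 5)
Your reduction of Theorem~\ref{first} to the single-step claim $(\star)$ is algebraically consistent with $R_{k+1}^2 = 2R_k^2+2$, but the proof has a genuine gap, and in fact $(\star)$ as stated is false. By rescaling, $(\star)$ with $m=1$ asserts
\[
{\rm B}^2(1)\times\mathbb{R}^4\ \hookrightarrow\ {\rm B}^4\bigl(\sqrt{2+2/a^2}\,\bigr)\times\mathbb{R}^2
\]
for every $a>0$, and letting $a\to\infty$ this would give the target radius arbitrarily close to $\sqrt 2$. This contradicts the Hind--Kerman lower bound $R\geq\sqrt 3$ quoted in Section~\ref{sec:se}: for $a>\sqrt 2$ no such embedding exists. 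So ``for all $m\geq 1$ and $a>0$'' must be dropped. The particular instances your induction actually invokes are $(m,a)=(n-2,\,R_{n-1})$, and the case $m=1$, $a=1$ lands safely at radius $2\geq\sqrt 3$; but the cases $m\geq 2$ with the growing values $a=R_{n-1}$ are substantial claims that you neither prove nor reduce to something proved. Your remark that $(\star)$ ``cannot be obtained by keeping ${\rm B}^{2m}(a)$ rigidly inside the larger ball'' is correct, which is exactly why the informal description of an infinite folding --- admittedly not written down, verified injective, or shown symplectic --- cannot stand in for a proof here.

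There is also a structural mismatch with the paper that makes your route strictly harder. The paper never proves anything like $(\star)$ for general $m$. Its engine is Theorem~\ref{answers}: ${\rm B}^2(1)\times\mathbb{R}^{2(n-1)}\hookrightarrow {\rm B}^2(\sqrt 2)\times{\rm B}^2(\sqrt 2)\times\mathbb{R}^{2(n-2)}$, a statement whose bounded factor in the domain is always a $2$-ball. Iterating this (with rescalings) yields a product of $2$-balls of radii $2^{1/2},2^1,\dots,2^{(n-2)/2},2^{(n-2)/2}$ times $\mathbb{R}^2$, and the $2$-balls are only at the very end enclosed in a single ${\rm B}^{2(n-1)}(R)$ by the trivial inclusion $\prod {\rm B}^2(r_i)\subset {\rm B}^{2(n-1)}(\sqrt{\sum r_i^2})$. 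Your induction instead rounds up to a $(2m)$-ball at every step, which discards the individual radii; to recover the same final bound you are then forced to assert $(\star)$ at a ball radius $a=R_{n-1}$ that the crude argument ``split ${\rm B}^{2m}(a)$ into $2$-balls and apply Theorem~\ref{answers}'' cannot reach (it gives radius $\sqrt{m+3}\,a$, which exceeds $\sqrt{2a^2+2}$ as soon as $a^2>2/(m+1)$). Finally, even at the level of a sketch, the core analytic difficulty of getting an embedding of the full $\mathbb{R}^4$ (rather than of ${\rm B}^4(S)$ for each $S$) is not addressed; in the paper this is handled by the family-limit result (Theorem~\ref{cor}) applied to a smooth family built from Guth's lemma and Hind--Kerman's construction, and nothing in your ``parking lot'' picture explains how to avoid that step.
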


Guth's work
(\cite[Section 2]{Guth2008},  \cite[Section~1]{HiKe2009}) 
  answers the \emph{bounded version} of the question by producing
  symplectic embeddings from ${\rm B}^2(1) \times {\rm B}^{2(n-1)}(S)$, for any $S>0$,
  into ${\rm B}^4(R) \times \R^{2(n-2)}$ for some\footnote{Hind and Kerman afterwards
  showed \cite[Theorems~1.1 and 1.3]{HiKe2009} that such embeddings 
  exist if $R>\sqrt{3}$ and do not exist if
  $R<\sqrt{3}$. The authors settled the case $R=\sqrt{3}$ in \cite{PeVN2013}.} $R>0$.
 The proof of Theorem~\ref{first} builds on  works of
Guth, Hind, Kerman, and Polterovich.

\subsection{Capacities} \label{cap}

Ekeland and Hofer's point of view on Gromov's Nonsqueezing turned out to
be powerful, and allowed them to construct infinitely many new
symplectic invariants, called \emph{symplectic capacities} \cite{EkHo1989, Hofer1990,
  Hofer1990b}. For each integer $1\leq d \leq n$, one can \emph{define} the notion a symplectic
$d$\--capacity (see Section~\ref{sec:capacities}). Whether given $d$,
one can \emph{construct} a symplectic $d$\--capacity is not clear. The
first symplectic $1$\--capacity was constructed by Gromov himself, it
is called the \emph{Gromov radius}:
$$
{\rm c}_{{\rm GR}}(M,\omega):=\sup\{r>0 \,\, |\,\, {\rm exists \,\, symplectic \,\, embedding}\,\, {\rm B}^{2n}(r) \hookrightarrow M\}.
$$
The fact that the Gromov radius is a symplectic $1$\--capacity is
equivalent to Gromov's Nonsqueezing theorem. The
volume induced by the symplectic form provides an example of
symplectic $n$\--capacity. Symplectic $d$\--capacities are called
\emph{intermediate capacities} when $1<d<n$.  In
\cite[page~17]{Hofer1990}, four years after Gromov's work, Hofer
predicts the nonexistence of intermediate capacities.\footnote{Hofer wrote: ``\emph{so far
  no examples are known for intermediate capacities {\rm ($1<d<n$)}.
  It is quite possible that they do not exist."} Hofer continues to
say: \emph{``Some evidence for this possibility is given by the fact
  that there is an enormous amount of flexibility for symplectic
  embeddings $M \hookrightarrow N$ with $\dim M \leq \dim N -2$, see
  Gromov's marvellous book }\cite{Gr1985}".}  We'll prove
the prediction of  Hofer:

\begin{theorem} \label{general0} Let $n\geq 3$.  If $1<d<n$,
  symplectic $d$\--capacities do not exist on any subcategory of the
  category of $2n$\--dimensional symplectic manifolds.
\end{theorem}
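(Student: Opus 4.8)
The plan is to derive Theorem~\ref{general0} from Theorem~\ref{first} by a short formal argument, so that all the analytic content is carried by the embedding of Theorem~\ref{first}. For $0\leq k\leq n$ write $Z_k:={\rm B}^{2k}(1)\times\mathbb{R}^{2(n-k)}\subset\mathbb{R}^{2n}$, so that $Z_n={\rm B}^{2n}(1)$ and $Z_0=\mathbb{R}^{2n}$; from the defining inequalities one has $Z_k\subseteq Z_j$ whenever $j\leq k$ (just discard non\--negative terms from the ball inequality), and hence $c(Z_n)\leq\cdots\leq c(Z_1)$ for any monotone $c$. A symplectic $d$\--capacity is monotone, conformal, positive on $Z_n$, and finite on $Z_d$ (see Section~\ref{sec:capacities}); since a capacity that is finite on $Z_1$ is automatically finite on $Z_k$ for every $k\geq1$, a capacity is genuinely of intermediate level $d$ (for $1<d<n$) exactly when it is finite on $Z_d$ but infinite on $Z_{d-1}$, and it is the non\--existence of such capacities that we must establish. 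All the model manifolds appearing below are understood to belong to the subcategory on which $c$ is defined.

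So suppose, for contradiction, that $n\geq 3$, $1<d<n$, and $c$ is a symplectic capacity with $c(Z_d)<\infty$ and $c(Z_{d-1})=\infty$. The key step is to bound $c(Z_1)$ using Theorem~\ref{first}, which provides a symplectic embedding
\[
Z_1={\rm B}^2(1)\times\mathbb{R}^{2(n-1)}\;\hookrightarrow\;{\rm B}^{2(n-1)}(R_0)\times\mathbb{R}^2,\qquad R_0:=\sqrt{2^{n-1}+2^{n-2}-2}.
\]
The target is just a dilate of $Z_{n-1}={\rm B}^{2(n-1)}(1)\times\mathbb{R}^2$: the linear map $(z,w)\mapsto(R_0z,R_0w)$ is a symplectomorphism from $(Z_{n-1},R_0^2\omega_0)$ onto $({\rm B}^{2(n-1)}(R_0)\times\mathbb{R}^2,\omega_0)$, so conformality yields $c\bigl({\rm B}^{2(n-1)}(R_0)\times\mathbb{R}^2\bigr)=R_0^2\,c(Z_{n-1})$. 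Since $d\leq n-1$ we have $Z_{n-1}\subseteq Z_d$, so chaining the embedding, this identity, and monotonicity gives
\[
c(Z_1)\;\leq\;c\bigl({\rm B}^{2(n-1)}(R_0)\times\mathbb{R}^2\bigr)\;=\;R_0^2\,c(Z_{n-1})\;\leq\;R_0^2\,c(Z_d)\;<\;\infty.
\]
But $d-1\geq 1$, so $Z_{d-1}\subseteq Z_1$ and therefore $c(Z_{d-1})\leq c(Z_1)<\infty$, contradicting $c(Z_{d-1})=\infty$. Equivalently: every symplectic $d$\--capacity is automatically a $1$\--capacity, so no intermediate capacity exists. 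Since $c$ and the subcategory were arbitrary, this proves Theorem~\ref{general0}; note that, unlike Guth's argument, this uses no exhaustion hypothesis.

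I expect the main obstacle to lie entirely in Theorem~\ref{first} --- producing a symplectic embedding of the \emph{whole} cylinder ${\rm B}^2(1)\times\mathbb{R}^{2(n-1)}$, not merely of its bounded subsets as in Guth's work, into a product with a single $\mathbb{R}^2$\--factor. In the deduction above the only subtle point is to use conformality (rather than mere monotonicity) in order to convert the fat cylinder ${\rm B}^{2(n-1)}(R_0)\times\mathbb{R}^2$ of radius $R_0>1$ into a finite multiple of $c(Z_{n-1})$, together with the bookkeeping that the indices in the two cylinder inclusions $Z_{n-1}\subseteq Z_d$ and $Z_{d-1}\subseteq Z_1$ stay in range --- which is exactly the hypothesis $1<d<n$ with $n\geq 3$.
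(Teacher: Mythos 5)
Your argument is correct, and the core strategy is the same as the paper's: use Theorem~\ref{first}, monotonicity, and conformality to produce a morphism from $B^{2(d-1)}(1)\times\R^{2(n-d+1)}$ to a dilate of $B^{2d}(1)\times\R^{2(n-d)}$, which forces a finite capacity value where $d$-nontriviality demands $\infty$. The routing differs mildly. The paper invokes Theorem~\ref{first} only in its weakest case (dimension $6$), $B^2(1)\times\R^4\hookrightarrow B^4(R)\times\R^2$, and then plugs this factor into the decomposition $Z_{d-1}\subset B^{2(d-2)}(1)\times\bigl(B^2(1)\times\R^4\bigr)\times\R^{2(n-d-1)}$, landing directly in $B^{2d}(1+R)\times\R^{2(n-d)}$. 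You instead apply Theorem~\ref{first} in the full ambient dimension $2n$ to bound $c(Z_1)$, then sandwich using the trivial inclusions $Z_{d-1}\subseteq Z_1$ and $Z_{n-1}\subseteq Z_d$. Composing your chain of morphisms collapses it to an embedding of $Z_{d-1}$ into a dilate of $Z_d$, just with a larger constant $R_0=\sqrt{2^{n-1}+2^{n-2}-2}$ instead of $1+R$ with $R$ coming from the $n=3$ case. The paper's version has the small advantage of using only the lowest-dimensional instance of Theorem~\ref{first} (which is the one proved most directly in Section~\ref{sec4}) and of keeping the evaluated objects confined to $Z_{d-1}$ and dilates of $Z_d$, which are guaranteed to lie in any subcategory on which $d$-nontriviality can even be stated; your version implicitly also needs $Z_1$ and $Z_{n-1}$ (and hence is cleanest on the full category $\mathrm{Symp}^{2n}$), though this is a cosmetic point. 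Otherwise the two proofs are equivalent.
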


The so called $d$\--nontriviality property of $d$\--capacities (see
Section \ref{sec:capacities}, item (3)) cannot be satisfied if $1 < d
< n$ because of Theorem \ref{first}. Hence Theorem~\ref{first} implies
Theorem~\ref{general0}.  Guth  proved  \cite{Guth2008}  
that  intermediate capacities which also satisfy the \emph{exhaustion property}
(the value of the capacity on an open set equals the supremum of the values on
 its compact subsets) do not exist (see Latschev~\cite{La} and Remark~\ref{Guth}).

\begin{remark}
Theorem \ref{general0} implies, in view of Gromov's theorem, that
symplectic $d$\--capacities exist if and only if $d\in \{1,\,n\}$.
Theorem \ref{first} is a
``squeezing statement": an arbitrarily large $\R^N$ may be squeezed
into $\R^2$ provided there is a bounded component whose size can be
increased to make room. This is in agreement with the fact that
$1$\--capacities \emph{exist} due to non\--squeezing,
while $d$\--capacities ($1<d<n$) \emph{do not exist} due to squeezing.
\end{remark}

The literature on the subject is extensive, and we refer to
\cite{CiHoLaSc2007, Gr1985, HoZe1994, Hu2011, Vi1989, Ze2010} and the
references therein.

\section{\textcolor{black}{Symplectic
    capacities}} \label{sec:capacities}

Symplectic capacities were invented in Ekeland and Hofer's influential
paper \cite{EkHo1989,Hofer1990}.  The first capacity, called the
\emph{Gromov radius}, was constructed
by Gromov \cite{G} (its existence follows from the Nonsqueezing Theorem).  
For the basic notions concerning symplectic
capacities we refer to \cite{CiHoLaSc2007}.  We follow the
presentation therein here.  Denote by $\mathcal{E}\ell\ell$ the
category of ellipsoids in $\mathbb{R}^{2n}$ with symplectic embeddings
induced by global symplectomorphisms of $\mathbb{R}^{2n}$ as
morphisms, and by ${\rm Symp}^{2n}$ the category of all symplectic
manifolds of dimension $2n$, with symplectic embeddings as morphisms.
A \emph{symplectic category} is a subcategory $\mathcal{C}$ of ${\rm
  Symp}^{2n}$ such that $(M,\omega) \in \mathcal{C}$ implies that
$(M,\lambda\omega) \in \mathcal{C}$ for all $\lambda>0$.  A
\emph{generalized symplectic capacity} on a symplectic category
$\mathcal{C}$ is a functor $c$ from $\mathcal{C}$ to the
category $([0,\infty],\leq)$ satisfying the following two axioms:
\begin{itemize}
\item[(1)] \emph{Monotonicity}: $c(M,\omega)\leq c(M',\omega')$ if
  there exists a morphism from $(M,\omega)$ to $(M,\omega')$ (this is a reformulation of
  ``functoriality");
\item[(2)] \emph{Conformality}: $c(M,\lambda\omega)=\lambda
  c(M,\omega)$ for all $\lambda>0$.
\end{itemize}
A \emph{symplectic capacity} is a generalized symplectic capacity which, in
addition to (1) and (2), is required to satisfy \emph{nontriviality}:
$$
c({\rm B}^{2n}(1))>0\,\,\,\,\,\, \textup{and}\,\,\,\,\,\,\, c({\rm B}^2(1) \times
\mathbb{R}^{2n-2})<\infty, 
$$
and the \emph{normalization property}
(that is $c({\rm B}^{2n}(1))=1$).  Now let's consider a symplectic
category $\mathcal{C} \subset {\rm Symp}^{2n}$ which contains
$\mathcal{E}\ell\ell$ and let $1 \leq d\leq n$.  A \emph{symplectic
  $d$\--capacity}  on $\mathcal{C}$  is a generalized capacity
satisfying:
\begin{itemize}
\item[(3)] \emph{$d$\--nontriviality}: $c(B^{2n}(1))>0$ and
  \[
  \begin{cases}
    c({\rm B}^{2d}(1) \times \mathbb{R}^{2(n-d)})<\infty\\
    c({\rm B}^{2(d-1)}(1) \times \mathbb{R}^{2(n-d+1)})=\infty\\
  \end{cases}
  \]
\end{itemize}
Symplectic $d$\--capacities are often called \emph{intermediate capacities} if $2
\leq d \leq n-1$. A symplectic $1$\--capacity  is the same as a symplectic capacity.
Intermediate capacities were introduced by Hofer \cite{Hofer1990}
in 1989, but no example has ever been constructed. Hofer
conjectured that it is quite possible that they would not exist. 

\begin{remark} \label{Guth}
The work of Guth \cite[Section 1]{Guth2008}
  implies that  intermediate
  capacities  $c$ which satisfy the \emph{exhaustion property} (Section~\ref{cap}) should not exist.    
  In fact, it is sufficient as Guth indicated that  
    $\lim_{R \to \infty}  c[{\rm B}^{2d}(1) \times {\rm B}^{2(n-d)}(R)] < \infty$ and
   $\lim_{R \to \infty} c[{\rm B}^{2(d-1)}(1) \times {\rm B}^{2(n-d+1)}(R)] =\infty$. The
   proof is analogous to the proof we give of Theorem \ref{general0}.
   \end{remark}

%
%
%
%
%
%
   
   \section{Capacities and embeddings into ${\rm B}^2(R_1) \times {\rm B}^2(R_2)
  \times \mathbb{R}^{2(n-2)}$}

 Let's now consider the following question. As before, let $n \geq 3$.

\begin{question} (Hind and Kerman \cite[Question 3]{HiKe2009}).
  \label{LaGu:queF20} What, if any, are the smallest $0<R_1 \leq R_2$
  such that ${\rm B}^2(1) \times {\rm B}^{2(n-1)}(S)$ may be
  symplectically embedded into ${\rm B}^2(R_1) \times {\rm B}^2(R_2)
  \times \mathbb{R}^{2(n-2)}$?
\end{question}

\vspace{1mm}

Guth's work implies that, for any $R \gneqq \sqrt{2}$ and $S>0$ there
is a symplectic embedding from ${\rm B}^2(1) \times {\rm
  B}^{2(n-1)}(S) $ into ${\rm B}^2(R_1) \times {\rm B}^2(R_2) \times
\mathbb{R}^{2(n-2)}$ (\cite[Theorem 1.6]{HiKe2009}).  Hind and Kerman
proved that for any $0 < R <\sqrt{2}$ there are no symplectic
embeddings of ${\rm B}^2(1) \times {\rm B}^{2(n-1)}(S) $ into ${\rm
  B}^2(R_1) \times {\rm B}^2(R_2) \times \mathbb{R}^{2(n-2)}$ when $S$
is \emph{sufficiently large}.  Their proof is based on a limiting
argument as $\sqrt{2}+\epsilon \to \sqrt{2}$ which may not be directly
applied to the $\sqrt{2}$ case. 

\begin{question}.
  \label{LaGu:queF2} What, if any, are the smallest $0<R_1 \leq R_2$
  such that ${\rm B}^2(1) \times \mathbb{R}^{2(n-1)}$ embeds
  symplectically into ${\rm B}^2(R_1) \times {\rm B}^2(R_2) \times
  \mathbb{R}^{2(n-2)}$?
\end{question}

\vspace{1mm}

The answer to Question \ref{LaGu:queF2} is
given by the following.

\begin{theorem} \label{answers} The product ${\rm B}^2(1) \times
  \mathbb{R}^{2(n-1)}$ embeds symplectically into ${\rm B}^2(R_1)
  \times {\rm B}^2(R_2) \times \mathbb{R}^{2(n-2)}$ with $0<R_1\leq
  R_2$ if and only if $\sqrt{2}\leq R_1$.
\end{theorem}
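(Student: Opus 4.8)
The statement is an equivalence, which I would split into its two implications; the ``only if'' is soft, and the ``if'' carries the whole difficulty. For the ``only if'' direction, suppose ${\rm B}^2(1) \times \mathbb{R}^{2(n-1)}$ embeds symplectically into ${\rm B}^2(R_1) \times {\rm B}^2(R_2) \times \mathbb{R}^{2(n-2)}$ with $0 < R_1 \leq R_2$. Restricting such an embedding to the bounded subset ${\rm B}^2(1) \times {\rm B}^{2(n-1)}(S)$ produces, for \emph{every} $S > 0$, a symplectic embedding of ${\rm B}^2(1) \times {\rm B}^{2(n-1)}(S)$ into ${\rm B}^2(R_1) \times {\rm B}^2(R_2) \times \mathbb{R}^{2(n-2)}$. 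By the Hind--Kerman obstruction recalled above \cite{HiKe2009}, no such embedding exists once $S$ is large enough when $R_1 < \sqrt 2$; hence $R_1 \geq \sqrt 2$. Note that one only invokes the obstruction in the \emph{open} range $R_1 < \sqrt 2$, so the fact that the Hind--Kerman limiting argument does not reach the critical value $R_1 = \sqrt 2$ is immaterial for this direction.

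Conversely, assume $\sqrt 2 \leq R_1 \leq R_2$. Then ${\rm B}^2(\sqrt 2) \times {\rm B}^2(\sqrt 2) \times \mathbb{R}^{2(n-2)}$ is an open subset of ${\rm B}^2(R_1) \times {\rm B}^2(R_2) \times \mathbb{R}^{2(n-2)}$, so composing with this inclusion reduces the problem to producing a single symplectic embedding
\[
\Psi \colon {\rm B}^2(1) \times \mathbb{R}^{2(n-1)} \hookrightarrow {\rm B}^2(\sqrt 2) \times {\rm B}^2(\sqrt 2) \times \mathbb{R}^{2(n-2)}.
\]
The whole point is that $\sqrt 2$ is reached as a \emph{closed} bound: the folding constructions of Guth, Polterovich, Hind and Kerman that underlie Theorem~\ref{first} give, a priori, only embeddings into ${\rm B}^2(\sqrt 2 + \varepsilon) \times {\rm B}^2(\sqrt 2 + \varepsilon) \times \mathbb{R}^{2(n-2)}$ for each $\varepsilon > 0$.

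I would obtain $\Psi$ from a folding lemma. Write $\mathbb{R}^{2(n-1)} = \mathbb{R}^2 \times \mathbb{R}^2 \times \mathbb{R}^{2(n-3)}$, the second $\mathbb{R}^2$-summand being present precisely because $n \geq 3$; one folds ${\rm B}^2(1) \times \mathbb{R}^2 \times \mathbb{R}^2$ into ${\rm B}^2(a) \times {\rm B}^2(b) \times \mathbb{R}^2$, using the third factor to house the overlap of the folds and carrying the remaining $\mathbb{R}^{2(n-3)}$ along untouched. A single fold that leaves the first factor of radius $1$ only reaches a product ${\rm B}^2(1) \times {\rm B}^2(b) \times \mathbb{R}^2$ with $b$ appreciably larger than $\sqrt 2$ (for instance $b = \sqrt 3$, for which $1^2 + b^2 = (\sqrt 2)^2 + (\sqrt 2)^2$), so to reach the balanced pair $(\sqrt 2, \sqrt 2)$ one must distribute the folding weight between the two $\mathbb{R}^2$-directions and perform a sequence of folds of geometrically decreasing size, each reusing the room freed by its predecessors (multiple symplectic folding), with the sizes tuned so that the two radii occurring after the $k$-th fold increase monotonically to the common limit $\sqrt 2$. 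One finally checks that the infinite composition converges uniformly on compact subsets to a symplectic embedding of the entire open cylinder ${\rm B}^2(1)\times\mathbb{R}^{2(n-1)}$ with image inside ${\rm B}^2(\sqrt 2) \times {\rm B}^2(\sqrt 2) \times \mathbb{R}^{2(n-2)}$; that embedding is the desired $\Psi$.

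The hard part, as should be clear, is the sharp closed constant in the ``if'' direction. A naive iteration of Guth-type folds loses a positive amount of room at each step, and these losses accumulate; the delicate task is to arrange the sizes and supports of the successive folds --- so that each fold reoccupies precisely the space vacated by the earlier ones --- in such a way that the supremum of all radii used is exactly $\sqrt 2$ rather than $\sqrt 2 + \varepsilon$, while the image still stays confined to two balls of radius $\sqrt 2$. The closely related technical point, and the place where Guth's analysis of the bounded problem stops, is the passage to the limit: one must verify that the infinite composition of folds is well defined and remains a genuine symplectic \emph{embedding} of the whole open cylinder, and not merely of arbitrarily large bounded pieces. This limiting step is where the new input over Guth and over Hind--Kerman is needed.
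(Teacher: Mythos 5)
Your ``only if'' direction is correct and coincides with the paper's: restricting a putative embedding of the full cylinder to ${\rm B}^2(1)\times{\rm B}^{2(n-1)}(S)$ for large $S$ contradicts Hind--Kerman's obstruction (\cite[Theorem~1.5]{HiKe2009}) whenever $R_1<\sqrt 2$.

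The ``if'' direction, however, contains a genuine gap at exactly the point you flag as the hard part, and the mechanism you propose for closing it is not the one that works. You suggest performing an infinite sequence of geometrically shrinking folds, tuned so that the radii at each stage increase monotonically to $\sqrt 2$, and then ``one finally checks that the infinite composition converges uniformly on compact subsets.'' There are two problems. First, each Guth/Hind--Kerman fold loses a definite amount of room (the paper shows a family reaching ${\rm B}^2(\sqrt 2+c\epsilon)\times{\rm B}^2(\sqrt 2)\times\cdots$, not $\sqrt 2$ itself), and there is no indication that a finite or infinite rearrangement of folds can cancel the cumulative loss exactly; nothing in your sketch explains why the supremum of the radii would land on $\sqrt 2$ on the nose. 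Second, and more fundamentally, there is no reason to expect the $\epsilon$-parametrized embeddings to converge at all, even ${\rm C}^0$ on compacts: the paper emphasizes precisely this point in the discussion of Theorem~\ref{cor}, noting that the images $\phi_t(W_t)$ ``may overlap in complicated ways, be disjoint etc.''

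What the paper actually does is to avoid the convergence question entirely. It first upgrades the Guth/Hind--Kerman constructions to a \emph{smooth} family $(\phi_\epsilon)_{\epsilon\in(0,\epsilon_0)}$ of symplectic embeddings ${\rm B}^2(1-\epsilon)\times{\rm B}^{2(n-1)}(1/\epsilon)\hookrightarrow {\rm B}^2(\sqrt2)\times{\rm B}^2(\sqrt2)\times\R^{2(n-2)}$ (Lemma~\ref{pp:10}, Theorems~\ref{pp:11} and~\ref{kh2}; verifying smooth dependence on parameters is itself nontrivial). It then applies Theorem~\ref{cor}: for a smooth family of embeddings of nested simply connected domains, one can produce an embedding of the union $W_0=\bigcup W_t$ by composing the $\phi_t$'s with symplectomorphisms of the target generated by time-dependent Hamiltonian flows. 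The key mechanism (Lemma~\ref{Lemma1}) is that the $t$-derivative of $\phi_t$ defines a time-dependent symplectic vector field on the image, which on a simply connected domain is Hamiltonian, and after cutting off one integrates its flow $\psi_v$ to match $\phi_{t'}=\psi_{t'}\circ\phi_t$ on a smaller compact subdomain; gluing those matched pieces (Lemma~\ref{Lemma2}) gives the embedding of $W_0$. None of the $\phi_t$'s is claimed to converge, and the resulting embedding is not a limit of the original family. Smoothness of the family in $t$ is indispensable for this to run, which is why the paper spends Sections~5 and~6 tracking smooth dependence through Guth's and Hind--Kerman's constructions. Your outline misses this idea, and as written the ``if'' direction does not go through.
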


\subsection*{Idea of proof of Theorem \ref{answers}}

\begin{itemize}
\item[\,] ({\bf Step 1}). We verify that the constructions of
  embeddings which Guth and Hind\--Kerman carried out to answer Question \ref{LaGu:queF20}, and which depends on parameters,
  vary \emph{smoothly} with respect to these parameters.  To do this,
  we follow these authors' constructions with some variations
  checking that at every step there is smooth dependance on the
  parameters involved. This is a priori unclear from the
  constructions, which involve choices of maps, curves, points, etc.
  We overcome this by supplying smooth formulas.  Sometimes we use
  ideas of Polterovich to construct these formulas.
\item[\,] ({\bf Step 2}).  From the smooth family in Step 1, we
  construct a \emph{new} family of smooth symplectic embeddings which
  has, as limit, a symplectic embedding $i$.  We are not claiming that
  $i$ is the ``limit" of the original family (which may not exist).
  The original family is modified according to the upcoming Theorem
  \ref{cor}.
\end{itemize}

\subsection*{Proof of Theorems \ref{first} and \ref{general0}}

\begin{proof}[Proof of Theorem \ref{first}]
  If $1\leq d \leq n-1$, Theorem \ref{answers} may be applied $d-1$
  times to get a symplectic embedding from ${\rm B}^2(1) \times
  \mathbb{R}^{2(n-1)}$ into 
  \[ \underbrace{{\rm B}^2(2^{\frac{1}{2}}) \times {\rm B}^2({2}^1)\ldots \times
  {\rm B}^2(2^{\frac{d-1}2}) \times {\rm B}^2({2}^{\frac{d-1}2})}_{d\,\, \textup{factors}}
  \times \mathbb{R}^{2(n-d)}.
\]
If $d=n-1$, we get a symplectic embedding into $ {\rm
  B}^2(2^{\frac{1}{2}}) \times \ldots \times {\rm
  B}^2(2^{\frac{n-2}2}) \times {\rm B}^2(2^{\frac{n-2}2}) \times
\mathbb{R}^{2} $ which is included in $${\rm
  B}^{2(n-1)}(\sqrt{2+2^2\ldots+2^{n-3}+2^{n-2}+2^{n-2}}) \times
\mathbb{R}^2.$$ The result follows from
$2+2^2\ldots+2^{n-3}+2^{n-2}+2^{n-2}=2^{n-1}+2^{n-2}-2$.
\end{proof}

\begin{proof}[Proof of Theorem \ref{general0}]
Let $1 < d < n$. We have
\[
{\rm B}^{2(d-1)}(1) \times \mathbb{R}^{2(n-d+1)} \subset {\rm B}^{2(d-2)}(1)
\times {\rm B}^{2}(1)\times \R^4 \times \R^{2(n-d-1)}.
\]
By  Theorem \ref{first} ${\rm B}^{2}(1)\times \mathbb{R}^{4}$
embeds into ${\rm B}^{4}(R)\times \mathbb{R}^{2}$ for some
$R>0$. Since ${\rm B}^{2(d-2)}(1)\times {\rm B}^{4}(R)$ is included in
${\rm B}^{2d}(1+R)$ we get a symplectic
embedding ${\rm B}^{2(d-1)}(1) \times \mathbb{R}^{2(n-d+1)} \hookrightarrow
{\rm B}^{2d}(1+R) \times \mathbb{R}^{2(n-d)}$,
which contradicts $d$\--nontriviality.
\end{proof}

\begin{remark}
The radius in Theorem~\ref{general0} is not optimal, as can be seen
for $n=3$ in view of \cite[Theorem~1.2]{PeVN2013}.
\end{remark}

\section{\textcolor{black}{Families with singular limits and
    Hamiltonian dynamics}} \label{sec2}

This section
  has been influenced by many fruitful discussions with Lev Buhovski,
  and we are very grateful to him.

\subsection*{Smooth families}

We start with the following notion of smoothness.

\begin{definition}
  \label{defi:smooth}
  Let $P,M,N$ be smooth manifolds. Let $(B_p)_{p\in P}$ be a family of
  submanifolds of $N$. For each $p\in B_p$, let $\phi_p: B_p \hookrightarrow M$ be
  an embedding.  We say that $(\phi_p)_{p\in P}$ is a \emph{smooth
    family of embeddings} if the following properties hold~:
  \begin{enumerate}
  \item there is a smooth manifold $B$ and a smooth map $g:P\times B\to N$ such
    that $g_p:b\mapsto g(p,b)$ is an immersion and $B_p=g(p,B)$, for
    every $p \in P$;
  \item the map $\Phi:P\times B \to M$ defined by $\Phi(p,b) :=
    \phi_p\circ g(p,b)$ is smooth.
  \end{enumerate}
  In this case we also say that $(\phi_p \colon B_p \hookrightarrow M_p)_{p\in P}$
  is a \emph{smooth family of embeddings} when $M_p$ is a submanifold
  of $M$ containing $\phi_p(B_p)$.  If $M$ and $N$ are symplectic,
  then a \emph{smooth family of symplectic embeddings} is a smooth
  family of embeddings $(\phi_p)_{p\in P}$ such that each
  $\phi_p:B_p\hookrightarrow M$ is symplectic.
\end{definition}

\begin{definition}
  \label{defi:smooth1}
  If in Definition \ref{defi:smooth}, $P$ is a subset of a smooth
  manifold $\tilde{P}$, then we say that the family $(\phi_p)_{p\in
    P}$ is \emph{smooth} if there is an open neighborhood $U$ of $P$
  such that the maps $g:P\times B\to N$ and $\Phi:P\times B \to M$ may
  be smoothly extended to $U \times B$.
\end{definition}

\subsection*{Limits of smooth families}

We present a construction to remove a singular limit of a smooth
family, see Figure \ref{fig:Wt} for an illustration of the theorem. A  related statement is \cite[Corollary~1.2]{Mc1991}.

\begin{theorem} \label{cor} Let $N$ be a symplectic manifold, and let
  $W_t\subset N$, $t \in (0,\,a)$, be a family of simply connected open subsets with
  $\overline{W_s} \subset W_t$, for $s,t \in (0,\,a)$ and $t<s$.  Let
  $$W_0:=\bigcup_{t \in (0,\,a)} W_t.$$ Let $$(\phi_t \colon W_t \hookrightarrow
  M)_{t \in (0,\,a)}$$ be a smooth family of symplectic embeddings such
  that for any $t,s>0$, the set $\bigcup_{v\in [t,s]}\phi_v(W_v)$ is
  relatively compact in $M$.  Then there is a symplectic embedding
  $W_0 \hookrightarrow M$.
\end{theorem}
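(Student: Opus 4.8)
The plan is to build the embedding $W_0 \hookrightarrow M$ as a ``generalized limit'' of the $\phi_t$ obtained after pre-composing each $\phi_t$ with a symplectomorphism of $W_0$ that concentrates all of $W_0$ into the small set $W_t$. The basic difficulty is that the family $(\phi_t)$ need not converge at all as $t \to 0$ (Step 2 of the idea of proof warns us of this), so we cannot simply take a naive limit; instead we must reparametrize. First I would produce, for each $t \in (0,a)$, a symplectomorphism $\psi_t \colon W_0 \to W_t$, smoothly (in the sense of Definition~\ref{defi:smooth1}) depending on $t$, and such that $\psi_t$ stabilizes as $t$ decreases in the following sense: for every compact $K \subset W_0$ there is $t_K>0$ with $\psi_t|_K$ independent of $t$ for $t<t_K$. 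Such a family exists because the $W_t$ form a nested exhaustion of the simply connected open set $W_0$ by relatively compact simply connected opens, so one can use a Moser-type / ``symplectic suction'' argument: time-dependent Hamiltonian flows generated by cutoff Hamiltonians that push $W_0$ inward, arranged so that the generating Hamiltonian is eventually zero on any fixed compact set. The hypothesis $\overline{W_s}\subset W_t$ for $t<s$ gives the room needed to interpolate, and simple-connectivity guarantees the relevant flux/primitive obstructions vanish so the isotopies can be taken symplectic (this is where a statement like \cite[Corollary~1.2]{Mc1991} enters).

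Next I would set $i_t := \phi_t \circ \psi_t \colon W_0 \hookrightarrow M$. Each $i_t$ is a symplectic embedding of the \emph{full} $W_0$, and by construction $i_t$ agrees with $i_s$ on any fixed compact $K \subset W_0$ once $t,s$ are small enough (both below $t_K$): indeed $\psi_t|_K=\psi_s|_K$ and both are composed with the \emph{same} $\phi_{\min(t,s)}$... — wait, this is the subtle point and needs care: $\phi_t \ne \phi_s$, so $\phi_t\circ\psi_t$ and $\phi_s\circ\psi_s$ need not agree on $K$. The correct fix is to instead arrange the $\psi_t$ so that $\psi_t(W_0) = W_t$ and then observe $\phi_t\circ\psi_t = (\phi_t|_{W_t})\circ\psi_t$; to make these compatible as $t$ varies one composes with a \emph{further} correction, namely one keeps $\phi_s$ fixed on the already-filled region by using that $\phi_t$ and $\phi_s$ restricted to $W_t$ (for $t<s$) are both symplectic embeddings of the same domain, hence differ by a symplectomorphism of $M$ onto its image, and the smoothness from Step~1 lets us absorb this into the reparametrization. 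In practice the clean route is: define $i := \lim_{t\to 0} \phi_t\circ\psi_t$ where the $\psi_t$ are chosen recursively so that the images $\phi_t(\psi_t(W_{1/k}))$ are eventually constant in $t$ for each $k$; the relative compactness hypothesis on $\bigcup_{v\in[t,s]}\phi_v(W_v)$ guarantees these nested images stay in a fixed compact set, so the increasing union $i(W_0):=\bigcup_k i(W_{1/k})$ lands in $M$ and is open.

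Finally I would check $i$ is a well-defined smooth symplectic embedding: it is defined on each $W_{1/k}$ by the eventual value $\phi_t\circ\psi_t|_{W_{1/k}}$, these agree on overlaps by construction, each is a symplectic embedding, and injectivity globally follows because the images $i(W_{1/k}\setminus W_{1/(k-1)})$ are nested consistently by the relative-compactness bookkeeping. The main obstacle, and the part requiring the most genuine work, is the construction in the first paragraph: producing the symplectomorphisms $\psi_t \colon W_0 \xrightarrow{\ \sim\ } W_t$ that are simultaneously (i) smooth in $t$ in the strong sense of Definition~\ref{defi:smooth1}, and (ii) \emph{eventually stationary} on every compact subset — reconciling these two demands is exactly what forces the use of a Hamiltonian isotopy with a carefully cut-off, $t$-dependent generating function, and is where simple-connectivity of the $W_t$ is essential to kill the flux obstruction. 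Everything after that is bookkeeping with nested exhaustions.
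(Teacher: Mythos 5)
Your plan hinges on producing symplectomorphisms $\psi_t\colon W_0 \to W_t$, but these cannot exist in general: $W_t$ is a proper open subset of $W_0$ carrying the restriction of the same symplectic form, and symplectic volume is an invariant, so whenever $\operatorname{vol}(W_0) > \operatorname{vol}(W_t)$ (the generic case for an increasing exhaustion) there is no symplectomorphism of $W_0$ onto $W_t$. So the ``source-side reparametrization'' is dead on arrival, independently of the subtler compatibility issue you flag. And that compatibility issue is not bookkeeping --- it is the whole content of the theorem. You correctly observe that $\phi_t\circ\psi_t$ and $\phi_s\circ\psi_s$ need not agree on a compact even if $\psi_t|_K=\psi_s|_K$, and then defer the fix to an unspecified ``recursive choice'' plus a vague appeal to smoothness. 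Nothing in the proposal actually makes the maps compatible as $t$ varies.

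The paper resolves the incompatibility on the \emph{target} side rather than the source. Lemma~\ref{Lemma1} differentiates the family in $v$ to get a time-dependent symplectic vector field $X_v := \partial_v \phi_v$ on $\phi_v(W_{s''})$; simple connectivity of the fixed domain $W_{s''}$ (not of $W_0$) makes $(\phi_v)^*X_v$ Hamiltonian, and a cutoff supported inside $\phi_v(W_{s'})$, using the nesting $\overline{W_s}\subset W_{s'}\subset W_{s''}\subset W_v$, extends the generating function $G_v$ to all of $M$. The hypothesis that $\bigcup_{v\in[t,t']}\phi_v(W_v)$ is relatively compact ensures the Hamiltonian flow $\psi_v$ of $M$ integrates to time $t'$, and by construction $\psi_{t'}\circ\phi_t|_{W_s}=\phi_{t'}|_{W_s}$. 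Lemma~\ref{Lemma2} then composes these compensating diffeomorphisms of $M$ along the discrete sequence $V_n=W_{1/n}$ to glue the restrictions $\phi_{1/n}|_{W_{1/(n-1)}}$ into a single well-defined embedding of $W_0=\bigcup_n V_n$; injectivity is immediate because any two points lie in a common $V_{n-1}$. Several ingredients you name --- simple connectivity to kill flux, cutoff Hamiltonians, compactness for flow existence, reduction to a discrete sequence --- do reappear, but arranged to act on $M$, where no volume obstruction arises and where the correction can be made \emph{exactly} compatible rather than approximately so.
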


\begin{figure}[h]
  \centering \centering
  \includegraphics[width=1\textwidth]{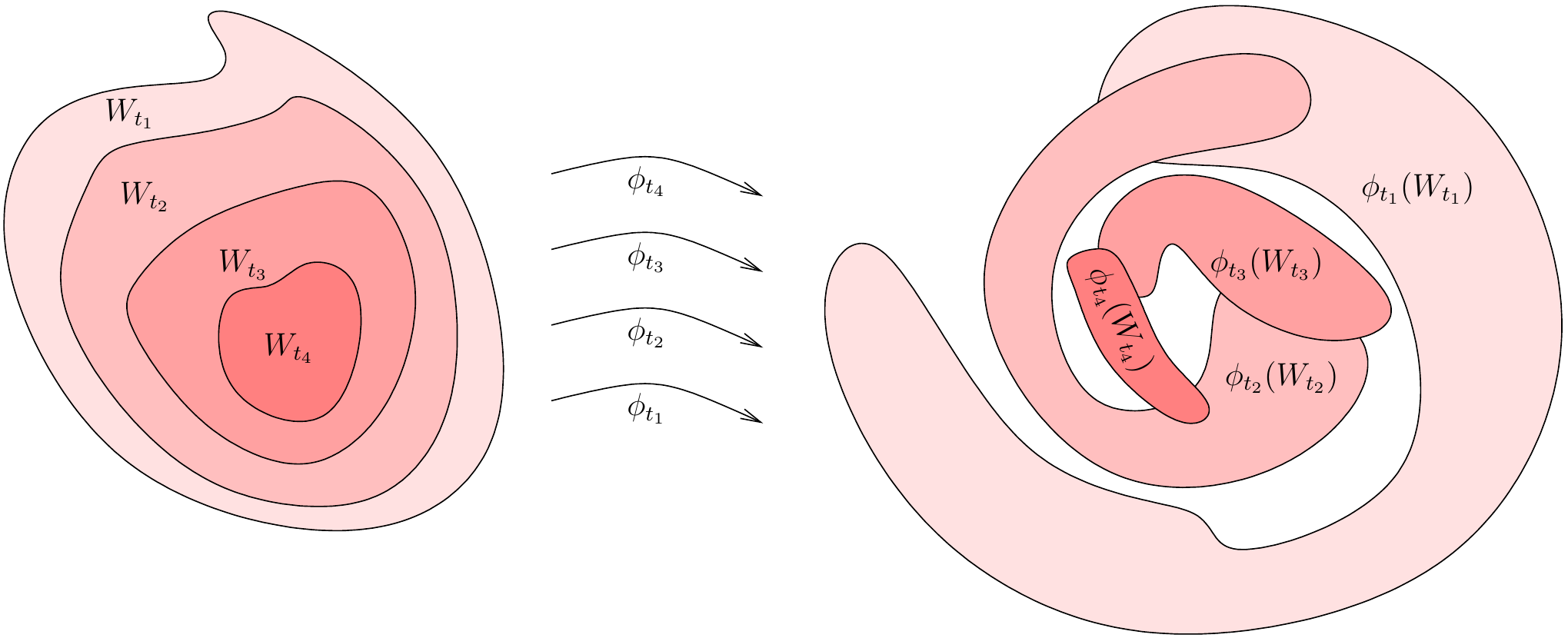}
  \caption{The figure illustrates the hypothesis of
    Theorem~\ref{cor}. Here $t_1<t_2<t_3<t_4$. The theorem does
    \emph{not} say that the family of embeddings $(\phi_t \colon W_t
    \hookrightarrow M)_{t \in (0,\,a)}$ has a limit as $t\to
    0$. Actually, the images $\phi_t(W_t)$ may overlap in complicated
    ways, be disjoint etc. An embedding on the union of all the $W_t$
    can be constructed using Hamiltonian flows to modify the family,
    see the proof of the result.}
  \label{fig:Wt}
\end{figure}

\begin{remark} \label{newest} If $N=\mathbb{R}^{2n}$ and if there is a
  continuous function $ r \colon (0,a) \subset \mathbb{R} \to
  (0,\infty)$, $v \mapsto r(v)$, such that $\phi_v(W_v)$ is contained
  in ${\rm B}^{2d}(r(v))$ for every $v \in (0,a)$, then the hypothesis
  in Theorem \ref{cor} that for any fixed $t,s>0$, the set
  $\bigcup_{v\in [t,s]}\phi_v(W_v)$ is relatively compact in $M$, is
  automatically satisfied. 
  Indeed, we have
  that $ \overline{ \bigcup_{v\in [t,s]}\phi_v(W_v)} \subset
  \overline{{\rm B}^{2d}(\max_{t \in [t,s]}r(v))}.  $
\end{remark}

\subsection*{Key lemmas}

We use two lemmas in order to prove Theorem \ref{cor}.

\begin{lemma} \label{Lemma1} Let $W_t\subset N$, $t \in (0,a)$, be a family of simply
  connected open subsets of a symplectic manifold $N$.  Let $(\phi_t
  \colon W_t \hookrightarrow M)_{t \in (0,a)}$ be a smooth family of symplectic embeddings
  such that:
  \begin{itemize}
  \item[{\rm (i)}] $\overline{W_s}\subset W_t$ when $t < s$;
  \item[{\rm (ii)}] for any $t,s>0$, the set $\bigcup_{v\in
      [t,s]}\phi_v(W_v)$ is relatively compact in $M$.
  \end{itemize}
  Then for any $t<t'<s$ there exists a smooth time-dependent
  Hamiltonian $G_v:M\to\R$, $v\in[t,t']$, whose Hamiltonian flow
  $\psi_v$ starting at $v=t$ is defined for all $v\in[t,t']$ and
  satisfies $ \psi_{t'}\circ \phi_t |_{W_s} = \phi_{t'} |_{W_s}.  $
\end{lemma}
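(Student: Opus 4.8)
The plan is to produce the Hamiltonian isotopy $\psi_v$ as the flow of a time-dependent vector field $X_v$ that conjugates the embeddings $\phi_t$ and $\phi_v$ along the shrinking family, and then to check that this vector field is globally Hamiltonian on $M$. First I would work on the source side: set $U_v := \phi_v(W_v) \subset M$, and on $\bigcup_{v \in [t,t']} U_v$ define the ``velocity field'' $Y_v$ at a point $q = \phi_v(x)$ by $Y_v(q) := \tfrac{d}{dv}\phi_v(x)$, which makes sense because $x$ ranges over the fixed larger set $W_s$ (here we use (i), so $\overline{W_s} \subset W_v$ for all $v \le t' < s$, and the smoothness of the family in the sense of Definition \ref{defi:smooth}, which lets us differentiate $\Phi(v,b) = \phi_v \circ g(v,b)$ in $v$). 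By construction, if $\psi_v$ is the flow of any extension of $Y_v$ to $M$ starting at $v=t$, then $\psi_v \circ \phi_t|_{W_s} = \phi_v|_{W_s}$; taking $v = t'$ gives the desired identity.

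The key point is that $Y_v$ is a \emph{symplectic} (indeed locally Hamiltonian) vector field on the open set $U_v^{\circ} := \phi_v(W_v)$: since each $\phi_v$ is a symplectic embedding, $\phi_v^*\omega_M = \omega_N|_{W_v}$ is independent of $v$, so differentiating in $v$ and using Cartan's formula gives $\mathcal{L}_{Y_v}\omega_M = 0$ on $U_v^{\circ}$, hence $d(\iota_{Y_v}\omega_M) = 0$ there. To upgrade this to a \emph{global} Hamiltonian, I would argue that $\iota_{Y_v}\omega_M$ is exact on a neighborhood of $\bigcup_{v}U_v$: because $W_v$ is simply connected and $\phi_v$ is an embedding, $U_v^{\circ}$ is simply connected, so the closed $1$-form $\iota_{Y_v}\omega_M = dg_v$ for a function $g_v$ on $U_v^{\circ}$, smoothly in $v$ (primitives can be chosen smoothly by integrating along paths from a basepoint, or by a smooth choice of primitive for a family of closed forms). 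Then extend $g_v$ to a smooth compactly supported function $G_v$ on all of $M$ using a cutoff supported in a slightly larger relatively compact open set containing $\overline{\bigcup_{v\in[t,t']}U_v}$ — this is exactly where hypothesis (ii) is used, to guarantee such a relatively compact neighborhood exists and that the cutoff can be chosen uniformly in $v$. The Hamiltonian vector field of $G_v$ then agrees with $Y_v$ on $\bigcup_v U_v^{\circ} \supset \bigcup_v \phi_v(W_s)$, and since $G_v$ is compactly supported its flow $\psi_v$ is complete, defined for all $v \in [t,t']$.

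I expect the main obstacle to be the \emph{smoothness and uniformity bookkeeping} rather than any single geometric idea: one must check that $Y_v$ is jointly smooth in $(v,q)$ (which requires unwinding Definition \ref{defi:smooth} and possibly inverting $g_v$ locally to express things in terms of $q$ rather than $b$), that the primitives $g_v$ can be chosen to depend smoothly on $v$, and that a single cutoff function works simultaneously for all $v$ in the compact interval $[t,t']$. A secondary subtlety is that $\psi_v \circ \phi_t$ is only asserted to equal $\phi_v$ on $W_s$, not on all of $W_t$ — this is harmless and in fact necessary, since $Y_v$ at time $v$ is only reliably defined on $U_v^{\circ}$, and the points $\phi_v(W_s)$ stay well inside $U_v^{\circ}$ for $v \le t'$ thanks to $\overline{W_s} \subset W_{t'}$. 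Once these technical points are in place, taking $v = t'$ yields $\psi_{t'} \circ \phi_t|_{W_s} = \phi_{t'}|_{W_s}$, which is the claim.
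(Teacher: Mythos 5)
Your overall strategy is the paper's: define the velocity field $X_v(\phi_v(x)) = \frac{d}{dv}\phi_v(x)$ on the image of a slightly larger set, observe that constancy of $\phi_v^*\omega_M$ makes $X_v$ symplectic, use simple connectivity to get a Hamiltonian primitive, cut it off to a globally defined compactly supported Hamiltonian $G_v$ (so the flow is complete by hypothesis (ii)), and finish with ODE uniqueness. That much is correct and matches Steps~1 and~3 of the paper's argument.

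The genuine gap is in the cutoff. You ask for a \emph{single} cutoff, ``supported in a slightly larger relatively compact open set containing $\overline{\bigcup_{v}U_v}$,'' chosen ``uniformly in $v$.'' This cannot work. The function $g_v$ is defined only on $U_v=\phi_v(W_v)$, so for $G_v=\rho\,g_v$ to be smooth on all of $M$ you need $\operatorname{supp}\rho\subset U_v$ \emph{for every} $v\in[t,t']$, i.e.\ $\operatorname{supp}\rho\subset\bigcap_v U_v$; at the same time you need $\rho\equiv1$ on $\phi_v(W_s)$ for every $v$, i.e.\ on $\bigcup_v\phi_v(W_s)$. Together these force $\bigcup_v\phi_v(W_s)\subset\bigcap_v\phi_v(W_v)$, which is not a consequence of the hypotheses — the paper explicitly emphasizes (Figure~\ref{fig:Wt} and its caption) that the images $\phi_v(W_v)$ may overlap in complicated ways or even be disjoint, in which case $\bigcap_v U_v$ can be empty. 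Your claim that the Hamiltonian vector field of $G_v$ ``agrees with $Y_v$ on $\bigcup_v U_v^\circ$'' is therefore unsupported (and moreover $Y_v$ is only defined on $\phi_v(W_{s''})$ for some $s''>t'$, not on all of $U_v$). The paper's fix is precisely to make the cutoff $v$-dependent: fix once and for all a bump function $\chi$ on the \emph{source} $N$, equal to $1$ on $W_s$ and $0$ outside $W_{s'}$ for an intermediate $s'\in(s'',s)$, and set $\tau_v:=\chi\circ\phi_v^{-1}$ on $\phi_v(W_v)$, extended by $0$; then $G_v:=H_v\tau_v$. The ``buffer zone'' where the cutoff transitions is then $\phi_v(W_{s'})\setminus\phi_v(W_s)$, which moves with $v$ exactly as it must. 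The paper's Step~2 is devoted to checking that $(v,y)\mapsto\tau_v(y)$ is jointly smooth, which is the technical content you omitted. Everything else in your outline would then go through.
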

\begin{proof}
  We divide the proof into three steps.
  \\
  \\
  \emph{Step 1}.  Let $s''\in (t',s)$. Let $x\in W_{s''}$. By
  \textup{(i)} we have that for any $v\in [t,t']$, $x\in W_v$.
  Therefore one can take the following derivative~:
  \[
  X_v(\phi_v(x)) := \deriv{\phi_v(x)}{v},
  \]
  which defines a vector field on $\phi_v(W_{s''})$. Because all
  $\phi_v$'s are symplectic, the time-dependent vector field $X_v$ is
  symplectic. Hence the pull-back $(\phi_v)^*X_v$ is symplectic. Since
  $W_{s''}$ is simply connected, $(\phi_v)^*X_v|_{W_{s''}}$ is
  Hamiltonian~: there exists a smooth function $(x,v)\mapsto
  \tilde{H}_v(x)$ on $W_{s''}\times [t,t']$ such that
  $$
  \iota_{(\phi_v)^*{X_v}}\omega = - \DD \tilde{H}_v.
  $$
  We let $$H_v(y):=\tilde{H}_v(\phi_v^{-1}(y)),$$ which is a Hamiltonian
  function defined on $\phi_v(W_{s''})$ for the vector field $X_v$.
  This concludes Step 1.
  \\
  \\
  \emph{Step 2}. We'll construct a smooth family $(\tau_v:M\to \R)_{v\in [t,t']}$, with~: \begin{align}
    \label{equ:cutoff1}
    \tau_v |_{\phi_v(W_s)} \equiv 1;\\
    \label{equ:cutoff2}
    \tau_v |_{M\setminus \phi_v(W_v)} \equiv 0.
  \end{align}
  In order to do this, fix $s'\in (s'',s)$, and let
  $\chi\in\textup{C}^\infty(M)$ be equal to $1$ on $W_s$ and to $0$ on
  $M\setminus W_{s'}$. We simply define
  \[
  \tau_v(y) :=
  \begin{cases}
    \chi\circ\phi_v^{-1}(y) & \text{ if } y\in\phi_v(W_v)\\
    0 & \text{ otherwise.}
  \end{cases}
  \]
  The map $\tau_v$, for $v\in[t,t']$, satisfies~\eqref{equ:cutoff1}
  and~\eqref{equ:cutoff2}. It remains to see that $ (v,y)\mapsto
  \tau_v(y) $ is smooth. First, let $(v_0,y_0)\in [t,t']\times M$ be
  such that $y_0\in \phi_{v_0}(W_{v_0})$. Using the continuity of the
  family $(\phi_v)$ and the fact that $\phi_{v_0}(W_{v_0})$ is open in
  $M$, we see that $y\in \phi_{v}(W_{v})$ for $(v,y)$ in a small open
  neighborhood of $(v_0,y_0)$. Hence, in this neighborhood,
$$(v,y)\mapsto \tau_v(y)=\chi\circ\phi^{-1}(v)$$ is smooth. Second,
suppose that $y_0\not\in \phi_{v_0}(W_{v_0})$.  Therefore $y_0\not\in
\phi_{v_0}(\overline{W_{s'}})$, and the latter being closed, there is
a small neighborhood of $(v_0,y_0)$ in which all $(v,y)$ satisfy
$y\not\in \phi_{v}(\overline{W_{s'}})$. Hence
$y\not\in\phi_v(W_{s'})$, and both cases in the definition of $\tau_v$
lead to $\tau_v(x)=0$, which proves the smoothness.
\\
\\
\emph{Step 3}. We may now define a smooth time dependent Hamiltonian $G
\colon [t,\,t'] \times M \to \mathbb{R}$ by $$G_v(y)=H_v(y) \tau_v(y)$$ 
for $y \in \phi_v(W_v)$ and $ G_v(y)=0 $ for $y \in M \setminus
\phi_v(W_v)$. Of course, $G_v=H_v$ on $\phi_v(W_s)$.

Let $Y_v$ be the Hamiltonian vector field associated to $G_v$ and let
$\psi(v,y)$ be the flow of $Y_v$ starting from time $t$~:
\[
\begin{cases}
  \displaystyle \deriv{\psi(v,y)}{v} = Y_v(\psi(v,y)) \\
  \psi(t,y)=y.
\end{cases}
\]
The vector field $Y_v$ vanishes outside of the fixed set $
\bigcup_{v\in [t,t']}\phi_v(W_v), $ which is relatively compact in $M$
for any fixed $t,t'>0$ by assumption (ii).  This implies that the flow
$\psi(v,y)$ can be integrated up to time $t'$.

Let
  $$\phy(v,x):=\psi(v,\phi_t(x)).$$ Then $\phy$ satisfies the Cauchy
  problem on $W_s$~:
  \[
  \begin{cases}
    \displaystyle \deriv{\phy(v,x)}{v} = Y_v(\phy(v,x)) = X_v(\phy(v,x))\\
    \phy(t,x) = \phi_t(x).
  \end{cases}
  \]
  Therefore, for all $x\in W_s$, $\phy(v,x)=\phi_v(x).$ In particular,
  when $v=t'$, we get, with $\Psi(y):=\psi(t',y)$,
  \[
  \Psi \circ \phi_t |_{W_s} = \phi_{t'} |_{W_s}
  \]
  This concludes the proof.
\end{proof}

\begin{lemma} \label{Lemma2} Let $V_1 \subset V_2 \subset \ldots
  \subset V_n \subset \ldots$ be a sequence of simply connected open
  subsets of a symplectic manifold $N$. Let $i_n \colon V_n \hookrightarrow M$, $n \in \mathbb{N}^*$, be
  a sequence of symplectic embeddings into another symplectic manifold $M$ 
  such that for any $n\geq 2$
  there exists a symplectomorphism $\psi_n \colon M \to M$ satisfying
  $$
  \psi_n \circ i_{n+1}|_{V_{n-1}} =i_n|_{V_{n-1}}. $$ Denote
  $$V:=\bigcup_{n=1}^{\infty}V_n.$$ Then there exists a symplectic
  embedding $j \colon V \hookrightarrow M$.
\end{lemma}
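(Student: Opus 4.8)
The plan is to build $j$ as a limit of suitably adjusted copies of the $i_n$, using the symplectomorphisms $\psi_n$ to make consecutive embeddings agree on larger and larger pieces of $V$, so that the sequence stabilizes on every fixed $V_k$. First I would set $\Theta_1 := \mathrm{id}_M$ and, for $n \geq 2$, define composite symplectomorphisms $\Theta_n := \psi_2 \circ \psi_3 \circ \cdots \circ \psi_n \colon M \to M$, and then consider the modified embeddings $\widetilde{\imath}_n := \Theta_n \circ i_n \colon V_n \hookrightarrow M$. The relation $\psi_n \circ i_{n+1}|_{V_{n-1}} = i_n|_{V_{n-1}}$ gives, after composing with $\Theta_{n-1}$ on the left, that $\Theta_{n-1}\circ \psi_n \circ i_{n+1}|_{V_{n-1}} = \Theta_{n-1}\circ i_n|_{V_{n-1}}$, i.e. $\widetilde{\imath}_{n+1}|_{V_{n-1}} = \widetilde{\imath}_n|_{V_{n-1}}$. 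Hence for a fixed index $k$, the embeddings $\widetilde{\imath}_n|_{V_k}$ coincide for all $n \geq k+1$; call this common map $j_k \colon V_k \hookrightarrow M$. These $j_k$ are manifestly compatible: $j_{k+1}|_{V_k} = j_k$, because both equal $\widetilde{\imath}_n|_{V_k}$ for $n$ large.

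Next I would define $j \colon V \to M$ by $j(x) := j_k(x)$ whenever $x \in V_k$; this is well defined and smooth by the compatibility just noted, since $V = \bigcup_k V_k$ is an increasing union of open sets and $j$ restricts to the smooth symplectic embedding $j_k$ on each $V_k$. Pulling back $\omega_M$, we get $j^*\omega_M = \omega_N$ on each $V_k$, hence on all of $V$, so $j$ is a symplectic immersion. It remains to check that $j$ is injective and a homeomorphism onto its image. Injectivity: if $j(x) = j(y)$ with $x \in V_a$, $y \in V_b$, then both lie in $V_{\max(a,b)} =: V_m$ and $j_m(x) = j_m(y)$; since $j_m$ is an embedding, $x = y$.

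The one genuine subtlety — the step I expect to be the main obstacle — is verifying that $j$ is an \emph{embedding} and not merely an injective immersion, i.e. that $j^{-1}$ is continuous on $j(V)$. Each $j_k$ is an embedding of $V_k$, but the images $j_k(V_k)$ are nested and their union $j(V)$ carries the subspace topology from $M$, and a priori an increasing union of embedded submanifolds need not be embedded (the topologies could fail to match up on the union). I would handle this by exploiting that $V_k$ is open in $V$: given $x \in V_k$, choose a relatively compact open neighborhood $\mathcal{O}$ of $x$ with $\overline{\mathcal{O}} \subset V_k$; then $j|_{\overline{\mathcal{O}}} = j_k|_{\overline{\mathcal{O}}}$ is a homeomorphism onto its image, and since $\overline{\mathcal{O}}$ is compact, $j(\overline{\mathcal{O}})$ is compact, hence closed in $M$, and $j(\mathcal{O})$ is open in $j(V)$ — this shows $j$ is an open map onto $j(V)$ locally, giving continuity of the inverse. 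Combined with injectivity and immersivity, this proves $j \colon V \hookrightarrow M$ is the desired symplectic embedding. I would also remark that the simple connectivity hypothesis on the $V_n$ is not actually needed for this lemma — it is there to match the hypotheses used when this lemma is applied together with Lemma \ref{Lemma1} in the proof of Theorem \ref{cor}.
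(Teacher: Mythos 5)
Your approach is essentially the paper's: telescope the symplectomorphisms into $\Theta_n := \psi_2\circ\cdots\circ\psi_n$, correct each $i_n$, and glue. There is, however, an off-by-one error in your definition. You set $\widetilde{\imath}_n := \Theta_n\circ i_n$, but the verification you actually carry out — composing $\psi_n \circ i_{n+1}|_{V_{n-1}} = i_n|_{V_{n-1}}$ on the left with $\Theta_{n-1}$, using $\Theta_{n-1}\circ\psi_n = \Theta_n$ — produces
\[
\Theta_n \circ i_{n+1}\big|_{V_{n-1}} = \Theta_{n-1}\circ i_n\big|_{V_{n-1}},
\]
which is compatibility of the maps $\Theta_{m-1}\circ i_m$, not of $\Theta_m\circ i_m$ as you defined them. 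Under your stated definition, neither side is $\widetilde{\imath}_{n+1}$ or $\widetilde{\imath}_n$, and the hypothesis gives no information about $\psi_{n+1}\circ i_{n+1}$. The fix is to set $\widetilde{\imath}_n := \Theta_{n-1}\circ i_n$ (so $\widetilde{\imath}_2 = i_2$ since $\Theta_1 = \mathrm{id}_M$), which is exactly the paper's formula $j|_{V_{n-1}} = \psi_2\circ\cdots\circ\psi_{n-1}\circ i_n$; with this correction the rest of your argument goes through unchanged. Your additional care about the embedding property is a reasonable elaboration: the paper simply states that an injective local symplectomorphism is an embedding, which tacitly uses that $N$ and $M$ have the same dimension (true in the application), whereas your relatively-compact-neighborhood argument works without that assumption. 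Your observation that simple connectivity of the $V_n$ is unused here — it is consumed by Lemma~\ref{Lemma1} — is also correct.
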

\begin{proof}
  Define $j \colon V \to M$ by
$$j(x):=\psi_2 \circ \psi_3 \circ \ldots \circ \psi_{n-1} \circ i_n(x)$$ for
$x \in V_{n-1} \subset V$ for $n>2$. This definition is independent of
the choice of $n>2$ for which $x \in V_{n-1}$. Then $j$ is a local
symplectomorphism which is injective (any two points $x,y$ are
contained in a common $V_{n-1}$); thus it is a symplectic embedding.
\end{proof}

\subsection*{Proof of Theorem \ref{cor}}
Consider the sequence of domains $V_n:=W_{1/n}.$ For each $n\geq 3$,
consider the family
\[
\left\{W_t \,\, | \,\,
  t\in{\textstyle\left(\frac{1}{n+2},\frac{1}{n-2}\right)}\right\},
\]
and the values $s=\frac{1}{n-1}, \,\,\,\,\, t=\frac{1}{n+1}\,\,\,\,\,
\textup{and}\,\,\,\,\, t'=\frac{1}{n}.$ Then Lemma~\ref{Lemma1} gives
us a symplectomorphism $\psi_n:M\to M$ such that
\[
\psi_n \circ i_{n+1}|_{V_{n-1}} =i_n|_{V_{n-1}},
\]
which is the assumption of Lemma~\ref{Lemma2}. Since $\bigcup_{n\geq
  3} V_n = \bigcup_{t \in (0,\,a)} W_t,$ we get Theorem \ref{cor}.

\section{\textcolor{black}{Guth's Lemma for families}} \label{sec2a}

The following statement is a smooth family version (see Definition~\ref{defi:smooth}) 
of the Main Lemma in Guth \cite[Section 2]{Guth2008}.  As before, $n\geq 3$.

\begin{lemma} \label{pp:10} 
 Let
$\Sigma$ be the symplectic torus
$\mathbb{T}^2=\mathbb{R}^2/\mathbb{Z}^2$ of area $1$ minus the
``origin'' (\emph{i.e.} minus the lattice $\Z^2$, $\Sigma=(\R^2 \setminus \Z^2)/\Z^2$). There is a smooth
  family $(i_{R})_{R>1/3}$ of symplectic embeddings $i_{R} \colon {\rm
    B}^{2(n-1)}(R) \hookrightarrow \Sigma \times {\rm
    B}^{2(n-2)}(10R^2).$
\end{lemma}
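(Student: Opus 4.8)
The plan is to follow Guth's construction of a symplectic embedding ${\rm B}^{2(n-1)}(R) \hookrightarrow \Sigma \times {\rm B}^{2(n-2)}(10R^2)$ step by step, and at each step replace every choice (of a symplectomorphism, a Hamiltonian, a cutoff function, a curve, a diffeomorphism of the torus, etc.) by an explicit formula depending smoothly on $R$, so that in the end the assembled map $(R,b)\mapsto i_R(b)$ is smooth in the sense of Definition~\ref{defi:smooth}. The guiding picture is the same as Guth's: write ${\rm B}^{2(n-1)}(R)$ inside a polydisc-type region, use the fact that the torus $\Sigma$ of area $1$ minus a point contains, after a suitable area-preserving diffeomorphism, regions of arbitrarily large symplectic ``width" in one pair of coordinates (this is where the punctured torus, rather than the disc ${\rm B}^2$, is essential: removing the lattice point allows long thin strips that wind around $\T^2$), and then iterate a squeezing move that trades area in one $\R^2$-factor against area in an $\R^2$-factor inside ${\rm B}^{2(n-2)}(10R^2)$.

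First I would fix the first factor: produce a smooth family $(\Theta_R)_{R>1/3}$ of area-preserving embeddings of a long rectangle $[0,L(R)]\times(-\delta(R),\delta(R))$ (with $L(R)\delta(R)$ close to $1$ and $L(R)\to\infty$ as $R\to\infty$) into $\Sigma$, given by an explicit ``winding strip" formula: the image wraps around the $x_1$-direction of $\T^2$ finitely many times while staying in a neighbourhood of a line of irrational-then-rationally-approximated slope avoiding $\Z^2$, and the dependence on $R$ enters only through $L(R)$ and $\delta(R)$, both chosen to be smooth. Next I would handle the ball-in-a-box step: ${\rm B}^{2(n-1)}(R)$ embeds symplectically (even linearly, hence trivially smoothly in $R$) into a product $\Delta^2(R') \times {\rm B}^{2(n-2)}(R'')$ for appropriate explicit $R'(R), R''(R)$; here I use Traynor-type or elementary polydisc embeddings with constants written out. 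The core is then the smooth family version of Guth's squeezing lemma: folding a region of the form (long strip in the $j$-th plane) $\times$ (ball of radius $\rho$) into (short strip) $\times$ (ball of radius $\sim$ const) by an explicit compactly supported Hamiltonian isotopy, where I would take Polterovich's explicit generating functions / Hamiltonians for the fold rather than an abstract isotopy. Composing the strip-into-torus map with finitely many such explicit squeezings (the number of iterations being a locally constant, hence smooth, function of $R$ on dyadic ranges, which one then patches) yields $i_R$ with image in $\Sigma\times {\rm B}^{2(n-2)}(10R^2)$; the constant $10$ comes from summing the geometric series of radii produced by the folds, and one checks the explicit inequality.

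The main obstacle, and the whole point of the lemma over Guth's original, is \emph{smoothness of the assembled family}, and within that the most delicate point is the dependence on the integer parameter counting how many times the strip winds around $\T^2$ (equivalently how many squeezing folds are performed): this is intrinsically a jump as $R$ crosses certain thresholds. The way I would deal with this is the standard device of overlapping dyadic windows: on each interval $R\in(2^{k-1},2^{k+1})$ one fixes the combinatorial data (number of winds, number of folds) to a single value valid for the whole window, obtains a smooth family $i_R^{(k)}$ there, and then one does \emph{not} need the $i_R^{(k)}$ to agree on overlaps because Lemma~\ref{pp:10} only asserts existence of \emph{some} smooth family on $(1/3,\infty)$ — so it suffices to build one global smooth family by choosing, for each $R$, the window containing it via a smooth partition-of-unity-driven selection, or more simply by verifying that the formulas, with the combinatorial data allowed to vary continuously (real winding ``number" and real fold ``count," which the explicit formulas tolerate at the cost of slightly worse constants absorbed into the factor $10$), are genuinely smooth in $(R,b)$ jointly. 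The remaining work — checking each explicit map is a symplectic embedding, checking the immersion/smoothness conditions (1)–(2) of Definition~\ref{defi:smooth} for the total map $g\colon (1/3,\infty)\times {\rm B}^{2(n-1)}\to \Sigma\times\R^{2(n-2)}$ together with $\Phi$, and verifying the final radius estimate $10R^2$ — is routine once the formulas are in place, so I would relegate it to explicit computation. I expect the real effort to be bookkeeping: reproducing Guth's and Hind--Kerman's constructions with enough control to see that nothing worse than composition, flows of smooth Hamiltonians, and smooth reparametrisations is used, which is exactly what Step~1 in the idea of proof of Theorem~\ref{answers} commits us to.
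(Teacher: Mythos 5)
Your plan does not match the paper's proof, and more importantly it misreads what Guth's ``Main Lemma'' actually does, which leads you into a difficulty (the discrete combinatorial parameter) that the correct construction never encounters. Guth's Main Lemma --- and the paper's proof of Lemma~\ref{pp:10} --- does \emph{not} involve winding strips, ball-in-polydisc decompositions, or iterated folding. Those ingredients belong to other parts of the machinery (the folding in Guth's polydisk theorem, the winding immersion in the Hind--Kerman-type step, which in this paper is Theorem~\ref{pp:11}, not Lemma~\ref{pp:10}). The actual construction for Lemma~\ref{pp:10} is essentially \emph{linear}: one reduces to $n=3$ by trivial factor-wise embeddings, then (following Polterovich) chooses a $2$-plane $V_R\subset\R^4$, varying smoothly in $R$ via an explicit formula solving $\frac{tR^2}{\sqrt{2t^2-2t+1}}=\frac19$, such that $\int_{{\rm B}^4(R)\cap V_R}\omega=\pi/9$; one builds a linear symplectomorphism $L_R$ taking $V_R$ to the $(x_1,y_1)$-plane, observes that every horizontal slice of the ellipsoid $L_R({\rm B}^4(R))$ is then a disk of radius $\leq 1/3<1/2$ so that the quotient $Q:\R^2\times\R^2\to\T^2\times\R^2$ is injective on it, bounds the projection to the $(x_2,y_2)$-plane by a volume argument (giving $\sqrt{72}R^2\leq 10 R^2$), and finally composes with a nonlinear shear $\tilde\Psi_R$, lifted from an explicit period-$1$ function $f_R(x_1)$, to push the image off $\Z^2\times\R^2$. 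Because $V_R$, $L_R$, $\ell_R$, $\rho(R)$, $f_R$ are all given by closed-form smooth choices, smoothness of the family in the sense of Definition~\ref{defi:smooth} is visible by inspection; there is no integer-valued parameter anywhere in the construction.

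The genuine gap in your proposal is precisely your treatment of the integer parameter (winding/fold count). Your first fix --- overlapping dyadic windows plus a ``smooth partition-of-unity-driven selection'' --- does not produce a single smooth family: a partition of unity lets you interpolate functions, but a convex combination of two symplectic embeddings is not a symplectic embedding, and Lemma~\ref{pp:10} requires one smooth family $(i_R)_{R>1/3}$, not a locally-defined collection. To merge $i^{(k)}_R$ with $i^{(k+1)}_R$ smoothly on the overlap you would have to exhibit a smooth symplectic isotopy between them that itself depends smoothly on $R$, which is an additional substantial claim you do not supply. Your second fallback --- letting the winding/fold count vary as a \emph{real} number and ``absorbing the cost into the constant $10$'' --- does not make sense as stated: a strip wound a non-integer number of times around $\T^2$ overlaps itself, so the map fails injectivity and is no longer an embedding, and a non-integer fold count has no meaning in the explicit Hamiltonian folding maps. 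So even setting aside that you are reconstructing the wrong lemma, the mechanism you propose for smoothness is the one place a real argument is needed, and it is not given.

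If you want to salvage your outline, the cheap repair is simply to drop the winding/folding scheme entirely and verify, as the paper does, that a single linear symplectomorphism followed by the torus quotient already lands the ball in $\Sigma\times {\rm B}^{2(n-2)}(10R^2)$; the only smooth choices to make are the plane $V_R$ (one branch of a quadratic, smooth for $R>1/3$, with a square-root singularity at $R=1/3$, which is exactly why the lemma requires $R>1/3$ strictly), the orthonormal/symplectic bases of $V_R$ and $V_R^\perp$, the rescaling $\mu(R)$ making the horizontal projection a round disk, and the cutoff $f_R$ for the puncture-avoiding shear. Each of these is a one-line explicit formula with no branching in $R$.
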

In order to verify Lemma \ref{pp:10} we need to explain why the
construction in \cite{Guth2008} depends \emph{smoothly} on the
parameter $R\in (1/3,\infty)$.  Checking this amounts to checking that
the ``choices" therein made depend smoothly on $R$ and
$\epsilon$. Guth's Lemma is valid for $R=1/3$, however we shall see
that the family is not smooth at this value (there is a square root
singularity).
\begin{proof}
  We may restrict to $n=3$ with a smaller constant~: $ \textup{B}^4(R)
  \hookrightarrow \Sigma \times \textup{B}^2(\sqrt{72}R^2).  $ Indeed,
  on the left hand-side we use the natural embedding
  $\textup{B}^{2(n-1)}(R)\subset B^{2(n-3)}(R)\times \textup{B}^4(R)$,
  and on the right hand-side we use the natural embedding
  $\textup{B}^{2(n-3)}(R)\times \textup{B}^2(\sqrt{72}R^2)\subset
  \textup{B}^{2(n-1)}(\sqrt{72R^4 + R^2})$ and notice that
  $\sqrt{72R^4+R^2}\leq 10R^2$, in view of $R>1/3$. To check
  smoothness with respect to $R$ we need to write some
  explicit formulas for maps and domains which were not explicitly
  written in Guth's paper. Then the smoothness with respect to $R$ 
  as in Definition \ref{defi:smooth} becomes equivalent to
  the smoothness of the formulas. In terms of the notation in
  Definition~\ref{defi:smooth}, we let $B=\textup{B}^{4}(1)\subset
  N=\R^4$, $P=(1/3,\infty)$, the map $g$ is just a scaling~:
  $g(R,b)=R \cdot b$, $R\in P$, $b\in B$, and $M=\Sigma\times \R^2$.  Guth's
  proof has two steps. The first one, due to Polterovich, is to
  construct a linear symplectic embedding of $\textup{B}^4(R)$ into
  $\T^2 \times \textup{B}^2(\sqrt{72}R^2)$, when $R\geq 1/3$. The
  second step is to modify this embedding by a nonlinear
  symplectomorphism in order to avoid a point in $\T^2$. Both steps
  depend on the radius $R$, therefore we have to check the smooth
  dependence.
  \\
  \\
  \emph{Step 1}.  We want a plane $V_R \subset \mathbb{R}^4$, depending
  smoothly on $R$, such that
  \begin{equation}
    \int_{{\rm B}^4(R) \cap V_R} \omega=\frac{\pi}{9}.
    \label{equ:VR}
  \end{equation}
  It turns out that one can give an easy formula for this plane. For
  $t>0$, let $ W_t:={\rm span}\{ (1,0,0,0),\,e_t \} $ with
  $e_t:=(0,t,1-t,0)$. Let $\varphi_t \colon \mathbb{R}^2 \to W_t$ be
  the linear parameterization given by
  $\varphi(u,v):=(u,tv,(1-t)v,0).$ We have that $\varphi_t^*\omega=t
  {\rm d}v \wedge {\rm d}u$.  On the one hand,
  \begin{eqnarray} {\rm B}^4(R) \cap W_t&=&\Big\{ (x_1,y_1,x_2,y_2)
    \in V_t\,\,|\,\,
    (x_1)^2+(y_1)^2+(x_2)^2+(y_2)^2\leq R^2  \Big\} \nonumber \\
    &=& \varphi_t \left(\Big\{(u,v)\in\R^2\, | \,
      \frac{u^2}{R^2}+\frac{v^2}{\frac{R^2}{2t^2-2t+1}} \leq
      1\Big\}\right), \nonumber
  \end{eqnarray}
  which is the image of an ellipse of area $\pi a b$, where $a=R$,
  $b=\frac{R}{\sqrt{2t^2-2t+1}}$. Therefore
$$
\int_{\varphi_t^{-1}(\textup{B}^4(R) \cap W_t)} t {\rm d}v \wedge {\rm
  d}u=\frac{t\pi R^2}{\sqrt{2t^2-2t+1}}.
$$
If $R>1/3$, the equation $\frac{tR^2}{\sqrt{2t^2-2t+1}}=\frac{1}{9}$
has two solutions, and one of them is a smooth positive function
$(1/3,\infty)\ni t\mapsto R(t)$.  Thus, we may let $V_t:=W_{R(t)},$
and we satisfy~\eqref{equ:VR}.

Now, let $(f_{1,R},f_{2,R})$ be an orthonormal basis of $V_R$, and
$(f_{1,R}^{\perp},f_{2,R}^{\perp})$ be a symplectic basis of the
symplectic orthogonal complement $V_R^\perp$. These basis may be
chosen to depend smoothly on $R$. Then the linear map
$$
(L_R)^{-1} \colon (x_1,y_1,x_2,y_2) \mapsto \lambda x_1f_{1,R}+
y_1f_{2,R}+x_2f_{1,R}^{\perp}+y_2f_{2,R}^{\perp}
$$
is symplectic when $\omega(\lambda f_{1,R}, f_{2,R})=1$,
i.e. $\lambda=(\omega(f_{1,R},f_{2,R}))^{-1}.$ Thus $(L_R)_{R>1/9}$ is
a smooth family of linear symplectomorphisms that map planes parallel
to $V_R$ to planes parallel to the $(x_1,y_1)$\--plane, and maps disks
parallel to $V_R$ to disks. Let $P$ be an affine plane parallel to the
$(x_1,y_1)$\--plane and let $\tilde{P}_R=L_R^{-1}(P)$.  Then ${\rm
  B}^4(R) \cap \tilde{P}_R$ is a ball of radius $\leq R$ parallel to
${\rm B}^4(R) \cap V_R$. Therefore, since $\omega$ is invariant by
translation, we have that $ \int_{{\rm B}^4(R) \cap \tilde{P}_R}
\omega \leq \int_{{\rm B}^4(R) \cap V_R} \omega=\frac{\pi}{9}.  $
(note that ${\rm B}^4(R) \cap \tilde{P}_R$ can be translated to be a
subset of ${\rm B}^4(R) \cap V_R$.) We know that $L_R({\rm B}^4(R)
\cap \tilde{P}_R)$ must be a Euclidean disk $\textup{B}^2(r)$ in the 
$(x_1,y_1)$\--plane. Since $L_R$ is symplectic, we have that
$$\int_{L_R({\rm B}^4(R)) \cap P} \omega =
\int_{\underbrace{\scriptstyle L_R({\rm B}^4(R) \cap
    \tilde{P}_R)}_{{\rm B}^2(r)}} \omega = \int_{{\rm B}^4(R) \cap
  \tilde{P}_R} \omega \leq \frac{\pi}{9},$$ and therefore $\int_{{\rm
    B}^2(r)} {\rm d}x_1 \, {\rm d}y_1\leq \frac{\pi}{9},$ and hence $r
\leq 1/3$.  $L_R(\textup{B}^4(R))$ is an ellipsoid (by this we mean
the open set bounded by the ellipsoid) in $\R^4$ whose half\--axes are
smooth, positive functions of $R$. Hence its projection onto the
$(x_2,y_2)$-plane is an ellipse with the same properties. Therefore,
there exists a smooth positive function $\mu(R)$ such that the
projection onto the $(x_2,y_2)$-plane of $\ell_R\circ
L_R(\textup{B}^4(R))$ is a disk, where $\ell_R$ is the
symplectomorphism $$(x_1,y_1,x_2,y_2)\mapsto
(x_1,y_1,\mu(R)x_2,(\mu(R))^{-1}y_2).$$ In the sequel, we assume that
$L_R$ is this new symplectomorphism $\ell_R\circ L_R$.

The rest of the proof of Step 1 does not involve any construction
depending on $R$. We repeat the argument here for the sake of
completeness. Let $Q:\R^2\times \R^2\to \T^2 \times \R^2$ be the map
defined as the quotient map $\R^2\to\T^2$ on the first factor,
and the identity on the second one (see Figure~\ref{fig:ellipsoid}).
\begin{figure}[h]
  \centering \centering
  \includegraphics[width=0.8\textwidth]{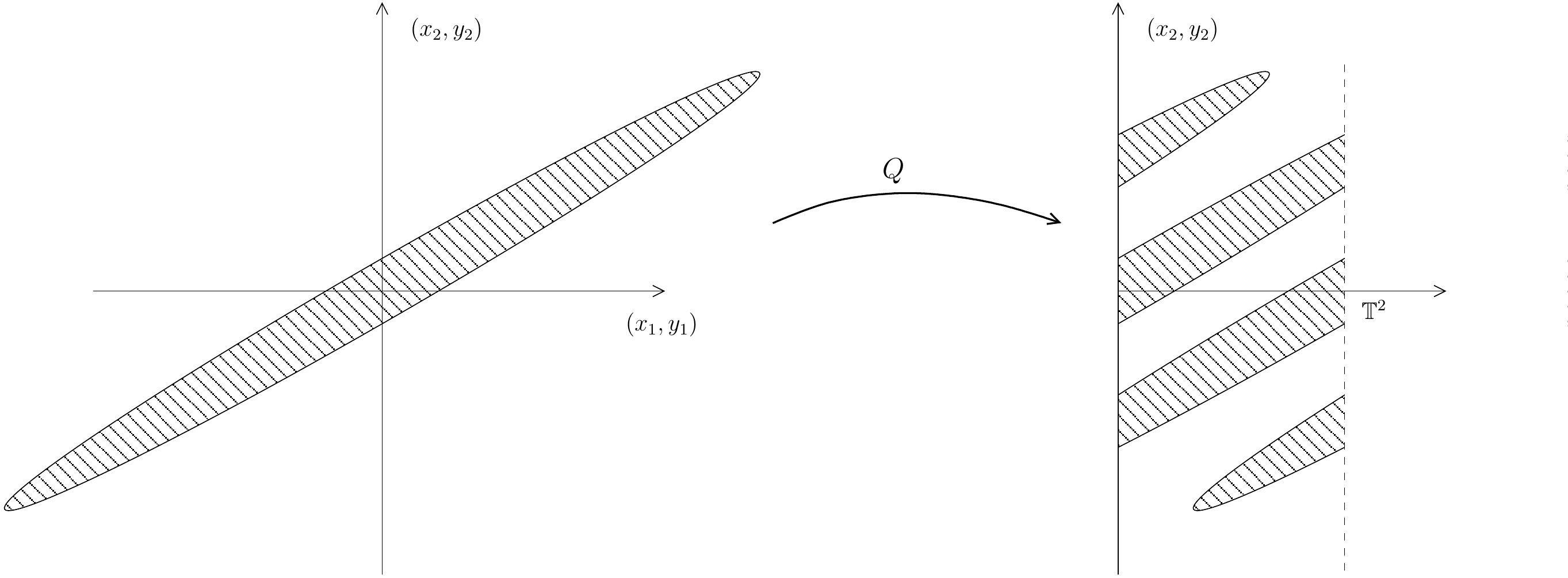}
  \caption{Vertical slices of the ellipsoid.}
  \label{fig:ellipsoid}
\end{figure}
$Q$ restricted to the ellipsoid $L_R(\textup{B}^4(R))$ is injective~:
indeed the ``vertical'' coordinates $(x_2,y_2)$ are preserved, and the
intersection of $L_R(\textup{B}^4(R))$ with a horizontal plane is a
disk of radius $\leq \frac{1}{3}<\frac{1}{2}$. Hence
$Q|_{L_R(\textup{B}^4(R))}$ is an embedding.  The desired embedding is
$Q \circ L_R$, which depends smoothly on $R$.  Let $\pi_2:\R^2\times
\R^2 \to \R^2$ be the projection onto the second factor.  It remains
to estimate the size of $\pi_2(L_R(\textup{B}^4(R)))$, which we know
is an open disk. Let $S$ be the radius of this disk, and let
$p=(x_2,y_2)$ be a point in the concentric disk of radius $S/2$.
Because of \eqref{equ:VR}, the preimage $\pi_2^{-1}(0,0)$ is a disk of
radius $1/3$. Since the ellipsoid is convex, the preimage
$\pi_2^{-1}(p)$ (which is a disk) must have a radius at least $1/6$.
We can now get a lower bound for the volume $v$ of
$L_R(\textup{B}^4(R))$ by integrating over the subset which projects
onto $x_2^2+y_2^2\leq S^2/4$~: $v\geq \pi\frac{S^2}{4}\frac{\pi}{36}$.
Since $L_R$ is symplectic and hence volume preserving, $v$ is also the
volume of $\textup{B}^4(R)$~: $v=\frac{1}{2}\pi^2R^4$. Hence $S\leq
\sqrt{72}R^2$.
\\
\\
\emph{Step 2}. We want now to modify $L_R$ by a nonlinear
symplectomorphism $\tilde{\Psi}_R$, such that the image
$\tilde{\Psi}_R\circ L_R(\textup{B}^4(R))$ avoids the integer lattice
$\Z^2\times\R^2$. Then the required embedding will simply be
$Q\circ\tilde{\Psi}_R\circ L_R$.

We are not going to repeat Guth's argument, but simply to point out
the smooth dependence on $R$.  Let $\pi_1:\R^2\times \R^2 \to \R^2$ be
the projection onto the first factor.  Let $\rho(R)\geq 1$ be a smooth
function such that the ellipse $\pi_1(L_R(\textup{B}^4(R)))$ is
contained in the disk of radius $\rho(R)$. For instance one can take
$\rho(R)$ to be $1$ plus the sum of the two half\--axes of the ellipse.
Then $\tilde{\Psi}_R=\Psi_R\otimes \textup{Id}_{\R}$, where $\Psi_R$
is a symplectomorphism of $\R^2$, obtained by lifting a diffeomorphism
$\Phi_R$ of the $x_1$ variable. We define $\Phi_R(x_1) =
x_1+f_R(x_1),$ where $f_R \colon \mathbb{R} \to \mathbb{R}$ is a smooth function
that satisfies the following properties~:
\begin{enumerate}
\item $f'_R(x_1)\geq - 0.1$;
\item $f_R$ is periodic of period $1$;
\item $f_R(k)=0$, $\forall k\in\Z$;
\item $f'_R(k)=100\rho(R)$, $\forall k\in\Z$;
\item $\abs{f_R}\leq 10^{-4}$;
\item The map $(1/3,\infty)\times\R \ni (R,x_1)\mapsto f_R(x_1)$ is
  smooth.
\end{enumerate}
A function satisfying these requirements is depicted in
Figure~\ref{fig:function_f}.
\end{proof}
\begin{figure}[h]
  \centering
  \includegraphics[width=0.8\textwidth]{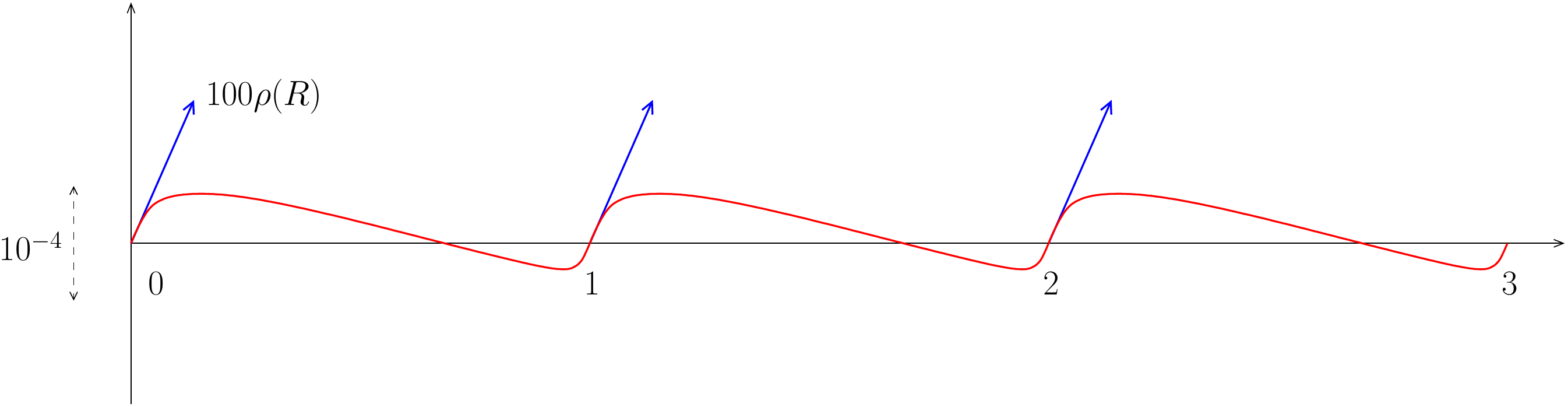}
  \caption{function $f_R$.}
  \label{fig:function_f}
\end{figure}

\section{\textcolor{black}{Embeddings into ${\rm B}^2(R) \times {\rm
      B}^2(R) \times \mathbb{R}^{2(n-2)}$}} \label{sec4}

We start with a particular case 
of the classical non\--compact Moser theorem (see also \cite[Theorem~B.1, Appendix]{Schlenk2005}):

\begin{lemma}[Greene and Shiohama,  Theorem~1 in \cite{GrSh1979}] \label{ncmoser0} 
If $\Sigma$ is a connected
  oriented $2$\--manifold and if $\omega$ and $\tau$ are area forms on
  $\Sigma$ which give the same finite area, then there is a
  symplectomorphism $\varphi \colon (\Sigma,\omega) \to (\Sigma,\tau)$.
\end{lemma}

The Greene\--Shiohama result remains valid when varying with respect to smooth parameters.

\begin{lemma} \label{ncmoser} Let $I \subset \mathbb{R}$ be an
  interval. Let $\{M_{\delta}\}_{\delta \in I}$ and
  $\{N_{\delta}\}_{\delta \in I}$ be smooth families of connected
  $2$\--manifolds such that on each $M_{\delta}, N_{\delta}$ there are
  area forms $\omega_{\delta},\tau_{\delta}$, respectively, giving the same finite
  area for each $\delta \in I$.  Then there is a smooth family of
  symplectomorphisms $(\varphi_{\delta} \colon M_{\delta} \to
  N_{\delta})_{\delta \in I}$.
\end{lemma}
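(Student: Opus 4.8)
The plan is to carry out the classical Moser trick in the non-compact setting, keeping track of smooth dependence on the parameter $\delta$. First I would reduce to the case where the two families of manifolds coincide: since $\{M_\delta\}$ and $\{N_\delta\}$ are smooth families of connected oriented $2$-manifolds, after shrinking $I$ and trivializing locally there is a smooth family of orientation-preserving diffeomorphisms $h_\delta \colon M_\delta \to N_\delta$ (a $2$-manifold's diffeomorphism type is locally constant, and the identification can be chosen smoothly in $\delta$ by the smooth-family hypothesis). Pulling back, replace $\tau_\delta$ by $h_\delta^*\tau_\delta$, an area form on $M_\delta$ of the same total area as $\omega_\delta$; it now suffices to find a smooth family of symplectomorphisms $(\varphi_\delta \colon (M_\delta,\omega_\delta)\to(M_\delta,\tau_\delta))_\delta$. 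By a further local trivialization we may assume $M_\delta\equiv M$ is a fixed connected oriented surface, with $\omega_\delta,\tau_\delta$ smooth families of area forms having equal (finite) total area $A(\delta)$.

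Next I would set $\sigma_\delta := \tau_\delta-\omega_\delta$ and seek to solve, for each $\delta$, a family of Moser equations. Consider the linear path $\omega_\delta^s := \omega_\delta + s\,\sigma_\delta$, $s\in[0,1]$; since $\omega_\delta$ and $\tau_\delta$ are both area forms compatible with the fixed orientation and pointwise positive, $\omega_\delta^s$ is a positive area form for every $s$ (convex combination of positive $2$-forms on a surface), smoothly in $(s,\delta)$. Because $\sigma_\delta$ is a $2$-form of total integral zero on $M$, and $M$ is an oriented connected surface, one can write $\sigma_\delta = d\beta_\delta$ for a smooth family of compactly-supported-in-the-right-sense $1$-forms $\beta_\delta$ — this is exactly the content (and the subtlety) of Greene--Shiohama, whose primitive can be produced by their argument and inspected to depend smoothly on the data. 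Defining the time-dependent vector field $X^s_\delta$ by $\iota_{X^s_\delta}\omega_\delta^s = -\beta_\delta$ (possible and smooth in $(s,\delta)$ since $\omega_\delta^s$ is nondegenerate), its flow $\psi^s_\delta$ from time $0$ to time $1$, if it exists, satisfies $(\psi^1_\delta)^*\tau_\delta = \omega_\delta$, and depends smoothly on $\delta$ by smooth dependence of solutions of ODEs on parameters.

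The main obstacle is precisely the non-compactness: one must ensure (i) that the primitive $\beta_\delta$ can be chosen so that $X^s_\delta$ is complete (so the flow $\psi^s_\delta$ exists for all $s\in[0,1]$), and (ii) that both the primitive and the resulting completeness bounds are uniform/smooth in $\delta$ on compact subintervals of $I$. Here I would follow Greene--Shiohama's exhaustion argument: exhaust $M$ by an increasing family of relatively compact domains, solve the cohomological equation on each piece with control on how the solution is glued, and arrange $\beta_\delta$ so that near infinity $X^s_\delta$ is tangent to a suitable foliation or otherwise controlled, guaranteeing completeness; the smooth dependence on $\delta$ comes for free if each choice in the construction (cutoff functions, local primitives, partitions of unity subordinate to the exhaustion) is fixed independently of $\delta$, which is possible since the exhaustion and the topology of $M$ do not vary. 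Finally, composing $\varphi_\delta := h_\delta^{-1}\circ\psi^1_\delta$ (and reversing the initial trivializations) yields the desired smooth family $\varphi_\delta\colon M_\delta\to N_\delta$, and patching the local-in-$\delta$ constructions over $I$ (which is an interval, hence the local trivializations can be chained) completes the proof.
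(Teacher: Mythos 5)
The paper itself states Lemma~\ref{ncmoser} without proof: it only offers the one-line remark that the Greene--Shiohama theorem ``remains valid when varying with respect to smooth parameters,'' so there is no written argument to compare yours against. Your sketch is the natural way to fill that gap and it is essentially what the authors implicitly have in mind: reduce to a fixed underlying surface carrying a smooth family of area forms of equal total area, run a parametrized Moser deformation along the linear path $\omega_\delta^s=\omega_\delta+s(\tau_\delta-\omega_\delta)$, and observe that all the genuine work is concentrated in the non-compact setting, namely producing a primitive $\beta_\delta$ of $\tau_\delta-\omega_\delta$ controlled enough near infinity that the Moser field is complete, with smooth dependence on $\delta$. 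You correctly flag that this is precisely the content of the Greene--Shiohama exhaustion construction, and that smooth $\delta$-dependence follows by fixing the auxiliary data (exhaustion, cutoffs, partitions of unity, local primitives) independently of $\delta$; that is the same level of justification the paper itself relies on. Two small remarks: the reduction via a smooth family $h_\delta\colon M_\delta\to N_\delta$ tacitly uses that $M_\delta$ and $N_\delta$ are diffeomorphic, which is not stated but is forced by the conclusion and holds in every application in the paper (in fact in the uses inside Theorem~\ref{pp:11} the source and target are literally the same surface, or a domain and a disk); and the phrase ``tangent to a suitable foliation or otherwise controlled'' is where your sketch, like the paper, defers to Greene--Shiohama rather than proving completeness outright. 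So this is a correct reconstruction rather than a different route, with the caveat that the hard analytic step is acknowledged rather than carried out --- exactly as in the paper.
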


\begin{figure}[h]
  \centering
  \includegraphics[width=0.7\textwidth]{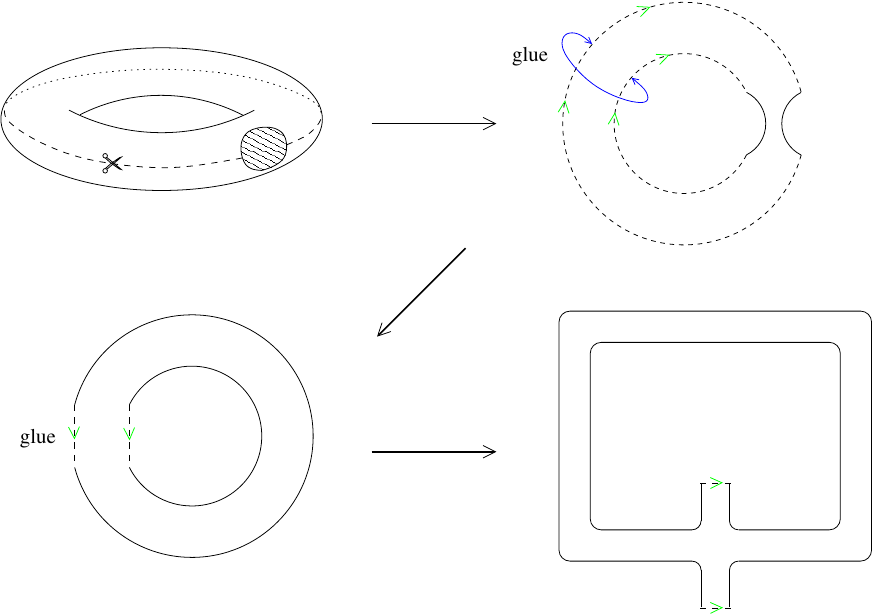}
  \caption{Steps to arrive at Figure \ref{fig:fancyimmersion}.}
  \label{fig:steps}
\end{figure}
The following is a smooth family version of the main statement proven
by Hind and Kerman in \cite[Section~4.1]{HiKe2009}.  It concerns ball
embeddings constructed using Hamiltonian flows.  As before, let
$\Sigma$ be the symplectic torus
$\mathbb{T}^2=\mathbb{R}^2/\mathbb{Z}^2$ of area $1$ minus the
``origin'' (\emph{i.e.} minus the lattice $\Z^2$). 

\begin{theorem} \label{pp:11} 
For any $\epsilon>0$,
we let
$\Sigma(\epsilon):=(\R^2\setminus\sqrt\epsilon\Z^2)/\sqrt\epsilon\Z^2$
be the scaling of $\Sigma$ with symplectic area $\epsilon$.  
  There exist constants $\epsilon_0>0$, $c>0$,
  and a smooth family $({I}_{\epsilon})_{\epsilon \in (0,\epsilon_0]}$
  of symplectic embeddings
 $
  {I}_{\epsilon} \colon \Sigma(\epsilon) \times {\rm B}^2(1)
  \hookrightarrow {\rm B}^2(\sqrt 2 + c\epsilon) \times {\rm
    B}^2(\sqrt 2).
  $
\end{theorem}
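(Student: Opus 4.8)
\emph{Strategy.} The plan is to revisit the embedding constructed by Hind and Kerman in \cite[Section~4.1]{HiKe2009} and to check that every choice made there can be arranged so as to depend smoothly on $\epsilon$, in exactly the spirit in which the proof of Lemma~\ref{pp:10} revisited Guth's construction. We use coordinates lifting $\Sigma(\epsilon)$ to $\R^2\setminus\sqrt\epsilon\,\Z^2$ together with Euclidean coordinates $(x,y)$ on ${\rm B}^2(1)$, and on the target $(x_1,y_1)$ on ${\rm B}^2(\sqrt2+c\epsilon)$ and $(x_2,y_2)$ on ${\rm B}^2(\sqrt2)$. As in Definitions~\ref{defi:smooth} and~\ref{defi:smooth1}, smoothness of the family $(I_\epsilon)$ will be reduced to the smoothness in $\epsilon$ of a finite list of explicit formulas: the rescaling $z\mapsto\sqrt\epsilon\,z$ identifies all the punctured tori $\Sigma(\epsilon)$, so---just as the map $g$ in Lemma~\ref{pp:10} was a pure scaling---one may take $B=\Sigma\times{\rm B}^2(1)$ and let $\epsilon$ enter only through this rescaling and through the explicit maps and Hamiltonians described below.

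\emph{The construction.} It proceeds in two stages, paralleling Hind--Kerman. First, I would normalise the source. The surface $\Sigma(\epsilon)$ is a once--punctured torus of area $\epsilon$, and the parametric Greene--Shiohama theorem (Lemma~\ref{ncmoser}, replacing Lemma~\ref{ncmoser0}) provides a smooth family of symplectomorphisms carrying it onto a fixed explicit model surface---a long thin handle glued to a small disk, realised concretely as the immersed region produced by the ``steps'' of Figure~\ref{fig:steps} and pictured in Figure~\ref{fig:fancyimmersion}---with all the data scaling with $\sqrt\epsilon$. Second, on the model surface times ${\rm B}^2(1)$ I would build the embedding into ${\rm B}^2(\sqrt2+c\epsilon)\times{\rm B}^2(\sqrt2)$ by a time--dependent Hamiltonian isotopy: following the ``fancy immersion'' of the punctured torus into the first factor, one transports the ${\rm B}^2(1)$--fibres inside the second factor ${\rm B}^2(\sqrt2)$ so as to resolve the self--intersections of the immersion and realise the monodromy that closes up the handle. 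The point of the numbers is the area bookkeeping: since ${\rm B}^2(1)$ occupies exactly half the area of ${\rm B}^2(\sqrt2)$ there is just enough room in the second factor to perform the required motion, while the small area $\epsilon$ of the handle is absorbed by enlarging the radius of the first factor from $\sqrt2$ to $\sqrt2+c\epsilon$; this fixes the constant $c$, and $\epsilon_0$ is chosen small enough that the handle is thin enough to be folded away. The generating Hamiltonian and the cut--off functions localising its flow are written down by explicit formulas, smooth in $\epsilon$ and supported in a fixed compact subset of ${\rm B}^2(\sqrt2+c\epsilon)\times{\rm B}^2(\sqrt2)$; hence, by the same argument as in Step~3 of the proof of Lemma~\ref{Lemma1}, the flow---and therefore $I_\epsilon$---depends smoothly on $\epsilon$.

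\emph{The main obstacle.} As with Guth's lemma, the difficulty is entirely in the smoothing. Hind and Kerman's construction runs through a list of non--canonical choices---the arcs exhibiting the handle, the fancy immersion of Figure~\ref{fig:fancyimmersion}, base points, bump functions, intermediate symplectomorphisms---none of which comes a priori with a smooth, or even continuous, dependence on $\epsilon$. The work is to replace each of them by a formula that is manifestly smooth on $(0,\epsilon_0]$, occasionally invoking Polterovich's device for producing symplectic formulas (as was already done in Step~1 of the proof of Lemma~\ref{pp:10}), and to check that no square--root--type singularity of the kind encountered there at $R=1/3$ occurs inside the parameter range: here the only degeneration is the collapse of $\Sigma(\epsilon)$ as $\epsilon\to0$, which is excluded, so $\epsilon_0>0$ may be taken to be any value for which the explicit formulas are defined. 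The remaining points---smooth dependence of the Hamiltonian flow on $\epsilon$, and the verification that the flow stays inside the target---are routine once the Hamiltonian is smooth in $(\epsilon,t,\cdot)$ and uniformly compactly supported, as will be arranged.
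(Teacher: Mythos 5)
Your high-level strategy is right, and it is indeed the paper's strategy: rework the Hind--Kerman construction of \cite[Section~4.1]{HiKe2009} so that every choice is replaced by an explicit formula smooth in $\epsilon$, using the parametric Moser lemma~\ref{ncmoser} to make an explicit immersion of the punctured torus symplectic, then a Hamiltonian flow to separate the double points by shifting the ${\rm B}^2$--fibres in the second factor. But what you have written is a program, not a proof, and the single hardest point is passed over in one sentence (``resolve the self--intersections \dots and realise the monodromy that closes up the handle''). The issue is that the Hamiltonian flow that shifts the fibres cannot act only in the second factor: if $\mathcal H_\epsilon = -\chi_\epsilon(x_1)x_2\sqrt\pi$, the time--$1$ map also moves $y_1$ by $\chi'_\epsilon(x_1)x_2\sqrt\pi$. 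This forced companion motion in the first factor threatens to create \emph{new} overlaps precisely where the immersion is already densely folded. Injectivity therefore rests on a quantitative interplay between the slope bound $|\chi'_\epsilon|\leq \frac1A+\epsilon$ (equation~\eqref{slopebound}) and the geometry of the immersed torus (width $A$ of the horizontal strip, width of the vertical strips, $a=\epsilon^2$); see Step~5(c) of the paper's proof, which is the crux of the whole argument. Nothing in your write--up engages with this.

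Two further points are glossed over. First, the ``model surface'' is not fixed: the Moser equality $\int_{\Sigma(\tilde\epsilon)} i_\epsilon^*\omega_0 = \tilde\epsilon$ (equation~\eqref{moserequality}) is a nontrivial constraint that determines a parameter $A=A(\epsilon)$ as a root of a quadratic, and one must check this root exists, is bounded below (paper: $A\geq\frac18$), and depends smoothly on $\epsilon$; the data do not just ``scale with $\sqrt\epsilon$''. Second, the constants $c$ and $\epsilon_0$ are not derived: $c$ comes from an explicit area estimate $\operatorname{Area}(D_\epsilon)=2\pi+\mathcal O(\epsilon)$ on the envelope of the immersion, and $\epsilon_0$ must be small enough for several separate reasons (existence of the root $A$, the slope and $C^0$ estimates on $\chi_\epsilon$, and the injectivity case analysis), so the claim that ``$\epsilon_0$ may be taken to be any value for which the explicit formulas are defined'' is not correct. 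To turn this into a proof you need the concrete immersion of Figure~\ref{fig:fancyimmersion}, the cut--off $\chi_\epsilon$ with properties~\eqref{cutoff}--\eqref{trickyinequality}, the explicit flow~\eqref{equ:flow}, the case analysis establishing injectivity, and the final composition with the Moser symplectomorphisms of ${\rm Q}(\sqrt\pi)$, ${\rm R}(\sqrt\pi,2\sqrt\pi)$ and $D_\epsilon$ onto disks.
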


\begin{proof}
 We have organized the proof in several steps. 
As in the proof of Lemma \ref{pp:10}, smoothness is the sense of 
Definition  \ref{defi:smooth}.
  \\
  \\
  \emph{Step 1} (\emph{Definition of immersion $i_{\epsilon}$}). For
  sufficiently small fixed $\epsilon>0$ we may define a smooth
  immersion
  \begin{eqnarray} \label{theone} i_{\epsilon} \colon
    \Sigma(\tilde{\epsilon}) \hookrightarrow \mathbb{R}^2,
  \end{eqnarray}
  where
  \begin{eqnarray} \label{useful_formula}
    \tilde{\epsilon}:=100\epsilon
  \end{eqnarray} by Figure \ref{fig:fancyimmersion}, with $a=\epsilon^2$.  In particular,
  the double points of the immersion are concentrated in the small
  region $[-a,a]\times[-\epsilon/2,\epsilon/2]$.
  The topological steps to transform the punctured torus
  $\Sigma(\tilde\epsilon)$ into such a domain are depicted in
  Figure \ref{fig:steps}.
  \\
  \\
  \emph{Step 2} (\emph{Modifying $i_{\epsilon}$ to make it symplectic}).
  By Moser's argument applied to
  $(\Sigma(\tilde\epsilon),\omega_{\Sigma(\tilde\epsilon)})$ and
  $(\Sigma(\tilde\epsilon),i_\epsilon^*\omega_0)$
  (Lemma~\ref{ncmoser}),  where $\omega_0$ is the standard symplectic
  form on $\mathbb{R}^2$,  the immersion 
  (\ref{theone}) may be modified so as to obtain a  symplectic immersion.
  For this to hold, we need that 
  \begin{eqnarray} \label{moserequality}
    \int_{\Sigma(\tilde{\epsilon})} i_{\epsilon}^*\omega_0={\rm
      area\,\, of}\,\, \Sigma(\tilde{\epsilon}).
  \end{eqnarray}
 
   \begin{figure}[h]
    \centering
    \includegraphics[width=0.85\textwidth]{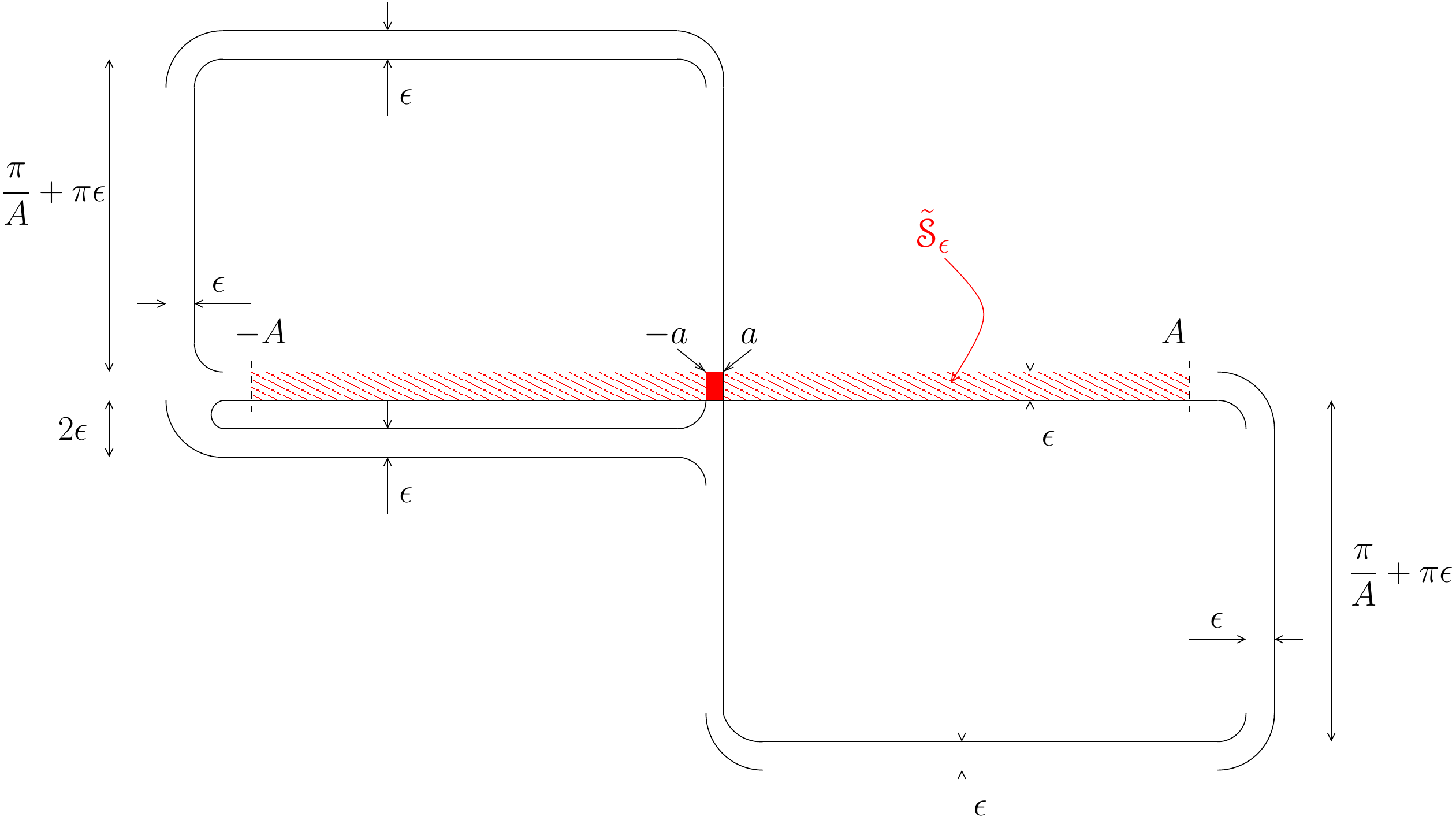}
    \caption{The immersion $i_{\epsilon} \colon
      \Sigma(\tilde{\epsilon}) \hookrightarrow \mathbb{R}^2$.}
    \label{fig:fancyimmersion}
  \end{figure}
  The right hand side of (\ref{moserequality}) is equal to
  $\tilde{\epsilon}=100\epsilon$ by definition of
  $\Sigma(\tilde{\epsilon})$.  Let's compute the left hand side of
  (\ref{moserequality}).  {From} Figure~\ref{fig:fancyimmersion}, it
  is equal, for sufficiently small $\epsilon>0$, to the sum of the
  areas of five horizontal rectangles of area $A\epsilon$, and four
  vertical rectangles of area $\frac{\pi\epsilon}{A-a}$, plus several
  corner squares whose area is a smooth function of $\epsilon$ of
  order $\epsilon^2$. Since $a=\epsilon^2$ we obtain that, for
  sufficiently small $\epsilon>0$~:
  \begin{eqnarray} \label{stf} \int_{\Sigma(\tilde{\epsilon})}
    i_{\epsilon}^*\omega_0= 5A \epsilon + \frac{4\pi
      \epsilon}{A}+\mathcal{O}(\epsilon^2).
  \end{eqnarray}
  Let $p(A)= \int_{\Sigma(\tilde{\epsilon})} i_{\epsilon}^*\omega_0$,
  so that (\ref{moserequality}) is equivalent to $p(A)=100\epsilon$.  We need to
  show that this equation has at least one solution which should be
  bounded from below by a positive constant independent of
  $\epsilon$. This follows from the study of the second order equation
  $q(A)=0$, where $q(A): = A(p(A)/\epsilon -100) =
  5A^2+(\mathcal{O}(\epsilon)-100)A+4\pi$, which has two positive
  solutions $A \geq \frac{1}{8}$ provided that we chose $\epsilon$ in
  Step 1 to be small enough.  Hence by choosing
  $A=A(\tilde{\epsilon})$ to be either solution we have a smoothly
  dependent function on $\tilde{\epsilon}$ for which Moser's equation
  (\ref{moserequality}) holds.  Therefore we may apply the
  non\--compact Moser theorem (Lemma~\ref{ncmoser}) to get a
  diffeomorphism $ \varphi_{\epsilon} \colon \Sigma(\tilde{\epsilon})
  \to \Sigma(\tilde{\epsilon}) $ such that $
  \varphi_{\epsilon}^*(i_{\epsilon}^*\omega_0)=\omega_{\Sigma(\tilde\epsilon)}$
  and therefore by composing $i_{\epsilon}$ with $\varphi_{\epsilon}$
  we may assume that (\ref{theone}) is symplectic. This concludes Step
  2.
  \\
  \\
  \emph{Step 3} (\emph{Preparatory cut\--off functions}). Choose a
  smooth cut\--off function $\chi_{\epsilon} \colon \mathbb{R} \to
  [0,1]$ which is non decreasing on $\R^-$, non increasing on $\R^+$,
  taking values as follows~:
  \begin{eqnarray}
    \chi_{\epsilon} \equiv  1  & \text{ on }&\,\,\,\,\,\, [-a,\,a];  \label{cutoff} \\
    \chi_{\epsilon} \equiv  0  & \text{ on }&\,\,\,\,\,\, \mathbb{R} \setminus [-A+\epsilon^2,\,
    A-\epsilon^2], \label{cutoff2}
  \end{eqnarray}
  and such that it satisfies the following bounds~:
  \begin{itemize}
  \item For every $x \in \mathbb{R}$,
    \begin{eqnarray} \label{slopebound} |\chi'_{\epsilon}(x)| \leq
      \frac{1}{A}+ \epsilon.
    \end{eqnarray}
  \item For every $x \in
    [-A+\frac{\epsilon}{2},\,A-\frac{\epsilon}{2}]$,
    \begin{eqnarray}
      \biggl\rvert \chi_{\epsilon}(x) - \Big(1-\frac{|x|}{A}\Big) \biggr\rvert  \leq \epsilon. \label{trickyinequality}
    \end{eqnarray}
  \end{itemize}
  Such a function $\chi_{\epsilon}$ is depicted in
  Figure~\ref{stepfunction}. The ${\rm C}^0$
  estimate~\eqref{trickyinequality} follows from
  $d=\frac{\epsilon}{200A}\leq \epsilon$ (recall that $A\geq
  \frac{1}{8}$). The ${\rm C}^1$ estimate~\eqref{slopebound} follows from
  the fact that the maximum slope of the graph in
  Figure~\ref{stepfunction} is $\dfrac{1}{A-\frac{\epsilon}{100}}$ and
  that, when $\epsilon<A$,
\[
\frac{1}{A-\tfrac{\epsilon}{100}} \leq \frac{1}{A}+\epsilon.
\]
    \begin{figure}[h]
    \centering
    \includegraphics[width=.6\textwidth]{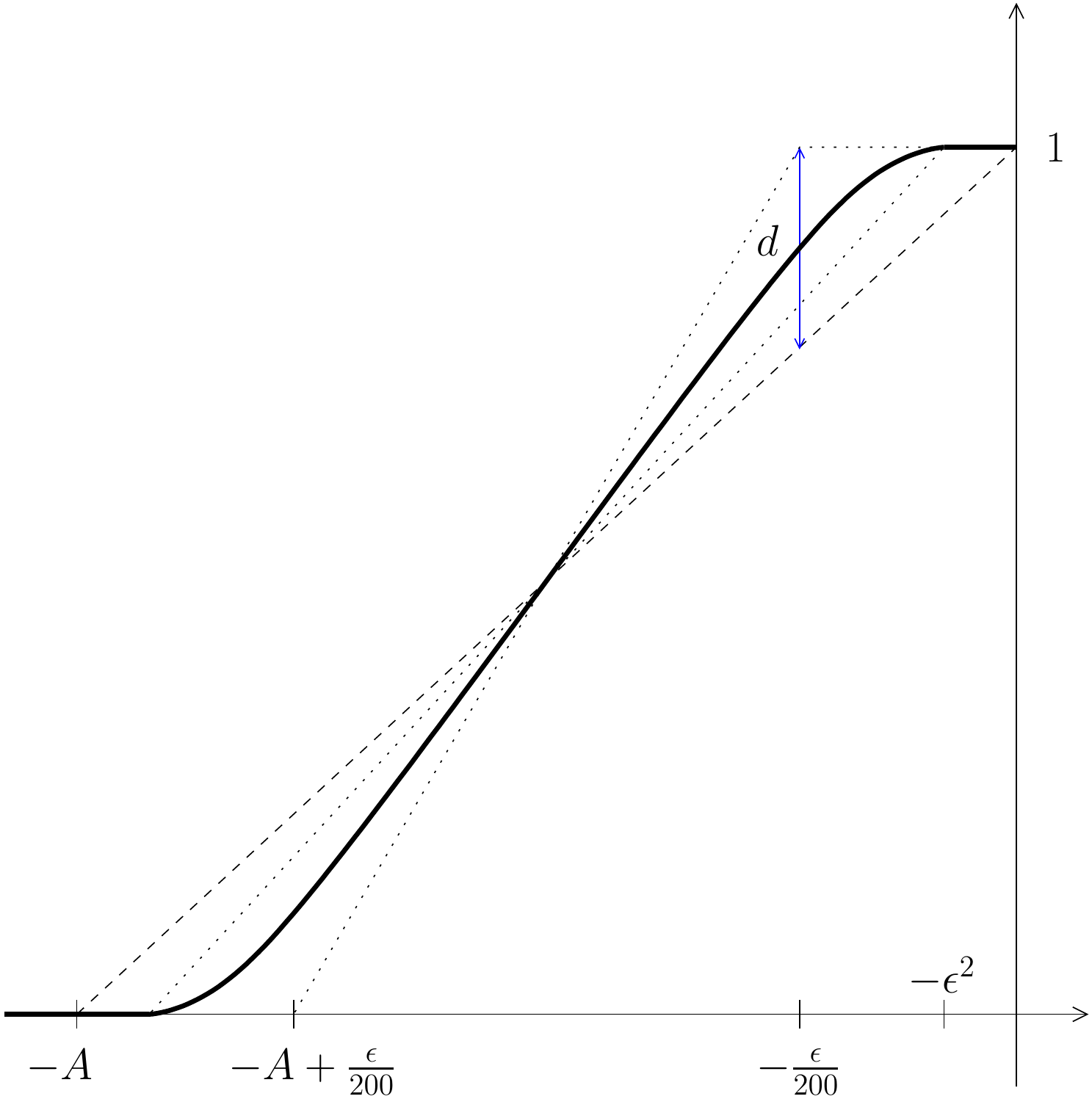}
    \caption{The cut\--off function $\chi_{\epsilon}$. We have
      represented the $x<0$ part; the function is symmetric with
      respect to $x=0$.}
    \label{stepfunction}
  \end{figure}
  This concludes Step 3.
  \\
  \\
  \emph{Step 4} (\emph{The smooth map $\mathcal{I}_{\epsilon}$}).  On
  $\mathbb{R}^2 \times \mathbb{R}^2$ we define the smooth family of
  Hamiltonian functions $(\mathcal{H}_{\epsilon}(x_1,y_1,x_2,y_2):=-
  \chi_{\epsilon}(x_1) x_2 \, \sqrt{\pi})_{\epsilon}$ whose time\--$1$ flows are given by
  the smooth family $(\Phi_{\epsilon})_{\epsilon}$~:
  \begin{eqnarray} \label{equ:flow} \,\,\,\,\,\,\,\,\,\,
    \Phi_{\epsilon}(x_1,y_1,x_2,y_2)=\Big(x_1,\,\,y_1+\chi'_{\epsilon}(x_1)x_2\sqrt{\pi},\,\,x_2,\,\,y_2+
    \chi_{\epsilon}(x_1)\sqrt{\pi} \Big).
  \end{eqnarray}
  Let ${\rm Q}(\sqrt{\pi})$ denotes the open square $ (0,\sqrt{\pi})
  \times (0,\sqrt{\pi}) $ and ${\rm R}(\sqrt{\pi},2\sqrt{\pi})$ be the
  open rectangle $(0,\sqrt{\pi}) \times (0,2\sqrt{\pi})$.  Let
  $\mathcal{S}_{\epsilon}$ be the connected subset of
  $\Sigma(\tilde{\epsilon})$ that is mapped to the horizontal strip $
  \widetilde{\mathcal{S}_{\epsilon}}=(-A,A) \times
  (-\frac{\epsilon}{2},\frac{\epsilon}{2}) $ by the immersion
  $i_{\epsilon}$ (See Figure \ref{fig:fancyimmersion}).  We define
  $\mathcal{I}_{\epsilon} \colon \Sigma(\tilde{\epsilon}) \times {\rm
    Q}(\sqrt{\pi}) \to \mathbb{R}^4 $ by
  \begin{equation}
  \mathcal{I}_{\epsilon}(\sigma,\,b):=
  \begin{cases}
    \Phi_{\epsilon}(i_\epsilon(\sigma),b)  & \text{ if } \sigma \in \mathcal{S}_{\epsilon};\\
    (i_{\epsilon}(\sigma),b)   & \text{ if } \sigma \notin \mathcal{S}_{\epsilon}.\\
  \end{cases}
\label{equ:map-I}
\end{equation}
  Since $0\leq \chi_\epsilon\leq 1$, the image of
  $\mathcal{I}_{\epsilon}$ lies in the set $\mathbb{R}^2 \times {\rm
    R}(\sqrt{\pi},2\sqrt{\pi})$. Moreover, $\mathcal{I}_{\epsilon}$ is
  smooth because the Hamiltonian flow $\Phi_{\epsilon}$ in
  (\ref{equ:flow}) is the identity near $x=\pm A$, $y \in
  (-\frac{\epsilon}{2},\frac{\epsilon}{2})$ (since $\chi_{\epsilon}=0$
  there by (\ref{cutoff2})). For the same reason,
  $\mathcal{I}_\epsilon$ is a local diffeomorphism, since $i_\epsilon$
  is a local diffeomorphism and $\Phi_\epsilon$ is a diffeomorphism.
  \\
  \\
  \emph{Step 5} (\emph{$\mathcal{I}_{\epsilon}$ is injective}). Assume
  $\mathcal{I}_{\epsilon}(\sigma,b)=\mathcal{I}_{\epsilon}(\sigma',b')$.
  There are three cases.
  \begin{itemize}
  \item[(a)] \emph{Suppose that $\sigma \in \mathcal{S}_{\epsilon}$
      and $\sigma' \in \mathcal{S}_{\epsilon}$}.  Then $
    \Phi_{\epsilon}(i_{\epsilon}(\sigma),b)=\Phi_{\epsilon}(i_{\epsilon}(\sigma'),b'),
    $ so since $\Phi_{\epsilon}$ is a diffeomorphism,
    $(i_{\epsilon}(\sigma),b)= (i_{\epsilon}(\sigma'),b')$.  Since
    $i_{\epsilon}|_{\mathcal{S}_{\epsilon}}$ is injective, we have
    that $\sigma=\sigma'$ and $b=b'$ as we wanted.

  \item[(b)] \emph{Suppose that $\sigma \notin \mathcal{S}_{\epsilon}$
      and $\sigma' \notin \mathcal{S}_{\epsilon}$}. Then
    $(i_{\epsilon}(\sigma),b)=(i_{\epsilon}(\sigma'),b')$ and since
    $i_{\epsilon}$ is injective outside of $\mathcal{S}_{\epsilon}$,
    we have $\sigma=\sigma'$ and $b=b'$.

  \item[(c)] \emph{Suppose that $\sigma \in \mathcal{S}_{\epsilon}$
      and $\sigma' \notin \mathcal{S}_{\epsilon}$}.  We have that $
    \Phi_{\epsilon}(i_{\epsilon}(\sigma),b)=(i_{\epsilon}(\sigma'),b').
    $ Let us write in coordinates $
    (x_1,y_1,x_2,y_2)=(i_{\epsilon}(\sigma),b)$ and
    $(x'_1,y'_1,x'_2,y'_2)=(i_{\epsilon}(\sigma'),b')$. {From}~\eqref{equ:flow}
    we have
    \begin{eqnarray}
      \begin{cases}
        x'_1=x_1;\\
        x'_2=x_2; \\
        y'_1=y_1+\chi_{\epsilon}'(x_1)x_2 \sqrt{\pi};\\
        y'_2=y_2+\chi_{\epsilon}(x_1)\sqrt{\pi}. \label{last} \\
      \end{cases}
    \end{eqnarray}
    In particular, $|y'_2-y_2|=\chi_{\epsilon}(x_1)\sqrt{\pi}.$ Since
    $y_2 \in (0,\sqrt{\pi})$ and $y'_2 \in (0,\sqrt{\pi})$ we must
    have that $\chi_{\epsilon}(1)<1$. Hence $|x_1|>a$, and we are
    outside of the vertical strip $|x_1|\leq a$. If  $x_1<-a$, the second to
    last equation in (\ref{last}), and the slope bound
    (\ref{slopebound}), imply
    \begin{eqnarray}
      \begin{cases}
        y_1 \leq y'_1 < y_1 + \bigl( \frac{1}{A} +\epsilon  \bigr) \pi;\\
        x_1 \geq -A \,\,\,\,\,\, (\textup{because}\,\,\, \sigma \in
        \mathcal{S}_{\epsilon}).  \label{impossible}
      \end{cases}
    \end{eqnarray}
    It follows from Figure \ref{fig:fancyimmersion} that
    (\ref{impossible}) is not possible.

    Similarly, if $x_1>a$ then $\chi'\leq 0$, and we have that $ y_1
    \geq y'_1 > y_1 - \bigl( \frac{1}{A}+ \epsilon \bigr)
    \pi, $ which is, again by Figure \ref{fig:fancyimmersion},
    impossible.
  \end{itemize}
  This concludes Step 5.
  \\
\\
 \emph{Step 6} (\emph{Conclusion}).  We have so far shown
  that we have a smooth embedding
  \begin{eqnarray} \label{inj:map} \mathcal{I}_{\epsilon} \colon
    \Sigma(\tilde{\epsilon}) \times {\rm Q}(\sqrt{\pi}) \to
    \mathbb{R}^2 \times {\rm R}(\sqrt{\pi},\, 2 \sqrt{\pi})
  \end{eqnarray}
  for sufficiently small values of $\epsilon>0$.  {From} the formula
  (\ref{equ:flow}) for the flow $\Phi_{\epsilon}$ we have that $
  \pi_1( \mathcal{I}_\epsilon(\Sigma(\tilde{\epsilon}) \times {\rm
    Q}(\sqrt{\pi}))) \subset D_{\epsilon}, $ where $D_{\epsilon}$ is
  depicted in Figure \ref{fig:sr}, and $\pi_1 \colon \mathbb{R}^2
  \times \mathbb{R}^2 \to \mathbb{R}^2$ is the projection onto the
  first factor. So $\mathcal{I}_{\epsilon}$ gives an embedding
  \begin{eqnarray} \label{zeromap} \mathcal{I}_{\epsilon} \colon
    \Sigma(\tilde{\epsilon}) \times {\rm Q}(\sqrt{\pi})
    \hookrightarrow D_{\epsilon} \times {\rm R}(\sqrt{\pi},\, 2
    \sqrt{\pi}).
  \end{eqnarray}
  Let $(\varphi_{\epsilon} \colon D_{\epsilon} \hookrightarrow {\rm
    B}^2(r(\epsilon)))_{\epsilon>0} $ be a smooth family of symplectic
  embeddings, where
  \begin{eqnarray} \label{radius0} r(\epsilon)=\sqrt{\frac{{\rm
          Area}(D_{\epsilon})}{\pi}}.
  \end{eqnarray}     
  Such family exists by Moser's argument (Lemma~\ref{ncmoser}). By
  construction of $D_{\epsilon}$ (Figure \ref{fig:sr})
  there exists a constant $\tilde{c}>0$ such that $r(\epsilon)\leq \sqrt{2} +
  \tilde{c}\epsilon$, for sufficiently small $\epsilon>0$.
  Again by Moser's argument there are symplectomorphisms
  \begin{eqnarray} \label{firstmap} 
  f \colon {\rm R}(\sqrt{\pi},\, 2
    \sqrt{\pi}) \to {\rm B}^2(\sqrt{2})
  \end{eqnarray} 
  and
  \begin{eqnarray} \label{secondmap} g \colon {\rm B}^2(1) \to {\rm
      Q}(\sqrt{\pi}).
  \end{eqnarray}
  We may combine the maps (\ref{zeromap}), (\ref{firstmap}), and
  (\ref{secondmap}) to get a smooth family of symplectic embeddings
  $(I_{\tilde\epsilon})_{\tilde\epsilon>0}$, for $\tilde\epsilon$
  sufficiently small, defined as follows
  \begin{eqnarray}
    \Sigma (\tilde{\epsilon}) \times {\rm B}^2(1) 
    \stackrel{{\rm Id} \otimes g}{\hookrightarrow} 
     \Sigma(\tilde{\epsilon}) \times {\rm Q}(\sqrt{\pi})
   \stackrel{\mathcal{I}_{\epsilon}}{\hookrightarrow} D_{\epsilon} \times {\rm R}(\sqrt{\pi},2\sqrt{\pi})  \nonumber \\
    \stackrel{{\rm Id} \otimes f}{\hookrightarrow} 
    D_{\epsilon} \times {\rm B}^2(\sqrt{2}) \stackrel{\varphi_{\epsilon} \otimes {\rm Id}}{\hookrightarrow} {\rm B}^2(\sqrt{2}+
    c \tilde\epsilon) \times {\rm B}^2(\sqrt{2}), \nonumber
  \end{eqnarray}
  where $c=\tilde c/100$.  This concludes the proof.\end{proof}
\begin{figure}[h]
  \centering
  \includegraphics[width=0.3\textwidth]{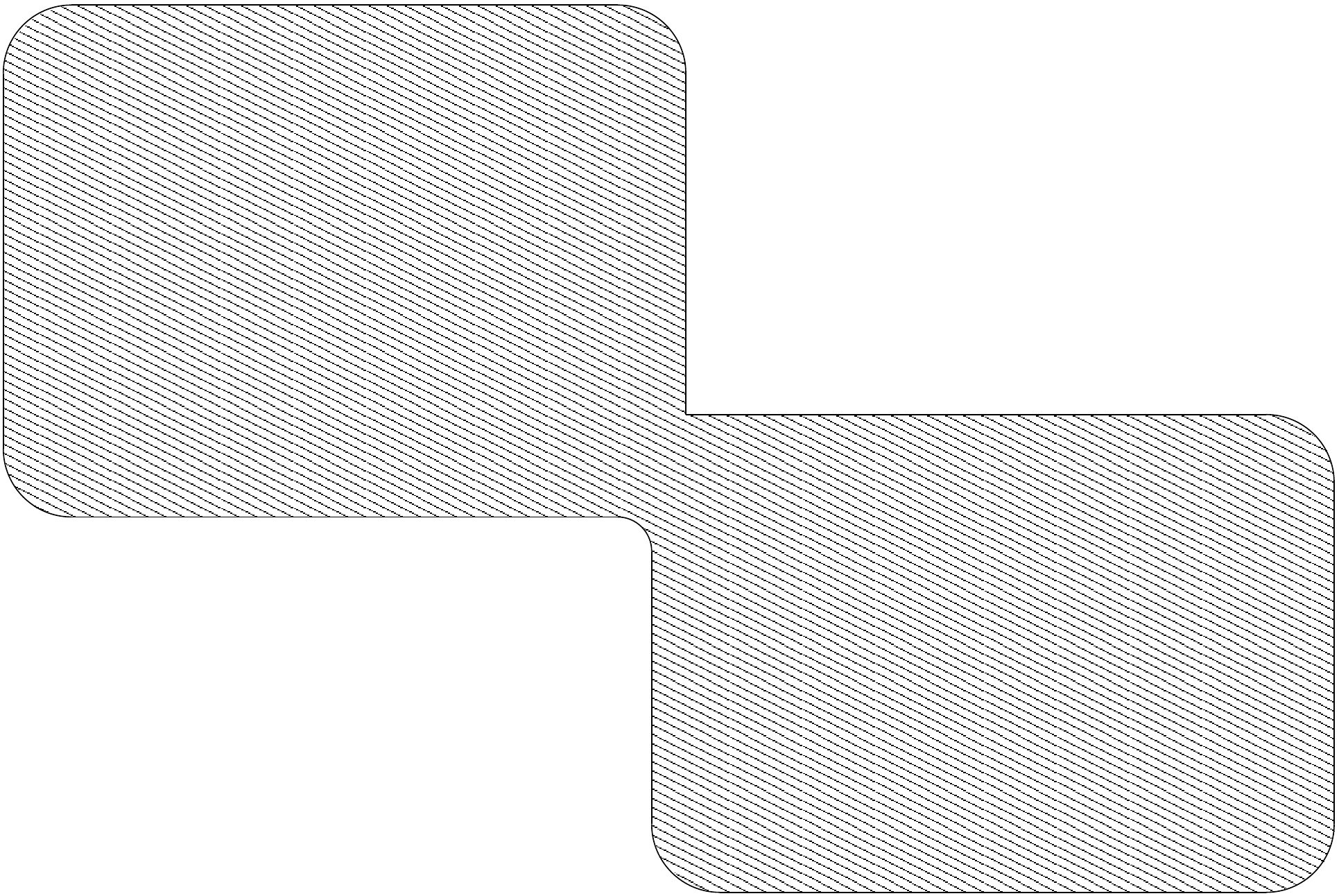}
  \caption{The open set $D_{\epsilon}$ is the envelope of the image of
    the immersion $i_\epsilon$ in
    Figure~\ref{fig:fancyimmersion}. Hence the total area of
    $D_{\epsilon}$ is of order $2(\tfrac{\pi}{A} \times A) +
    \mathcal{O}(\epsilon)= 2\pi +  \mathcal{O}(\epsilon)$.}
  \label{fig:sr}
\end{figure}

Next we prove that \cite[Theorem~1.6]{HiKe2009} holds for smooth families.

\begin{theorem}\label{kh2} 
  Let $n\geq 3$.  There exist constant $C,C'>0$ and a smooth family of
  symplectic embeddings
  \[
  i_{S,\,R} \colon {\rm B}^2(1) \times {\rm B}^{2(n-1)}(S)
  \hookrightarrow {\rm B}^2(R) \times {\rm B}^2(\sqrt 2) \times
  \textup{B}^{2(n-2)}({\textstyle\frac{C S^2}{\sqrt{R-\sqrt 2}}}),
  \] where $(S,R)$ vary in the open set
  \begin{equation}
    \{ (S,R)\in \R^2\, | \qquad S>0, \quad \sqrt 2 < R < \sqrt 2 + C'S^2\}.
    \label{equ:domain}
  \end{equation}
\end{theorem}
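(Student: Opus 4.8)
The plan is to stack the two family statements already in hand — the scaled form of Guth's Lemma (Lemma~\ref{pp:10}) and the Hind--Kerman family (Theorem~\ref{pp:11}) — and then perform one change of variables; no new symplectic construction is needed, only bookkeeping of smoothness and of radii. \emph{Step 1 (scaling Guth's Lemma).} For $\lambda>0$ let $d_\lambda\colon z\mapsto\lambda z$ be the dilation of a Euclidean space. Since $d_\lambda^*\omega_0=\lambda^2\omega_0$, the conjugate $d_\lambda\circ\varphi\circ d_\lambda^{-1}$ of a symplectic embedding is again symplectic; applying this to $i_\rho\colon {\rm B}^{2(n-1)}(\rho)\hookrightarrow\Sigma\times {\rm B}^{2(n-2)}(10\rho^2)$ with $\lambda=\sqrt\epsilon$ and Guth parameter $\rho=S/\sqrt\epsilon$, and noting that $d_{\sqrt\epsilon}$ carries $\Sigma=\R^2/\Z^2\setminus\{0\}$ to $\Sigma(\epsilon)$, I obtain a smooth family
\[
j_{S,\epsilon}\colon {\rm B}^{2(n-1)}(S)\hookrightarrow\Sigma(\epsilon)\times {\rm B}^{2(n-2)}\!\Big(\frac{10S^2}{\sqrt\epsilon}\Big),\qquad S>0,\ 0<\epsilon<9S^2,
\]
the last inequality being exactly $\rho>1/3$. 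Smoothness in the sense of Definition~\ref{defi:smooth} follows from that of $\rho\mapsto i_\rho$, of $(S,\epsilon)\mapsto S/\sqrt\epsilon$, and of the dilations, together with $\Sigma(\epsilon)=d_{\sqrt\epsilon}(\Sigma(1))$.

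\emph{Step 2 (inserting the ${\rm B}^2(1)$ factor and composing).} Taking the product of $j_{S,\epsilon}$ with $\mathrm{id}_{{\rm B}^2(1)}$ and permuting factors gives a smooth family ${\rm B}^2(1)\times {\rm B}^{2(n-1)}(S)\hookrightarrow\Sigma(\epsilon)\times {\rm B}^2(1)\times {\rm B}^{2(n-2)}(10S^2/\sqrt\epsilon)$. Composing the first two factors with the embedding $I_\epsilon\colon\Sigma(\epsilon)\times {\rm B}^2(1)\hookrightarrow {\rm B}^2(\sqrt2+c\epsilon)\times {\rm B}^2(\sqrt2)$ of Theorem~\ref{pp:11} (valid and smooth for $\epsilon\in(0,\epsilon_0]$), and using that a composition of smooth families is smooth, produces a smooth family
\[
{\rm B}^2(1)\times {\rm B}^{2(n-1)}(S)\hookrightarrow {\rm B}^2(\sqrt2+c\epsilon)\times {\rm B}^2(\sqrt2)\times {\rm B}^{2(n-2)}\!\Big(\frac{10S^2}{\sqrt\epsilon}\Big)
\]
over $\{S>0,\ 0<\epsilon<\min(\epsilon_0,9S^2)\}$; Definition~\ref{defi:smooth1} is used to handle the endpoint $\epsilon_0$.

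\emph{Step 3 (reparametrisation).} Put $\epsilon=(R-\sqrt2)/c$, so that $\sqrt2+c\epsilon=R$ and $10S^2/\sqrt\epsilon=CS^2/\sqrt{R-\sqrt2}$ with $C:=10\sqrt c$. As $(S,R)\mapsto(S,(R-\sqrt2)/c)$ is a diffeomorphism, pulling back the family of Step~2 leaves it smooth, and the constraints $\epsilon<\epsilon_0$, $\epsilon<9S^2$ become $R<\sqrt2+c\epsilon_0$ and $R<\sqrt2+9cS^2$; thus on an open region of $(S,R)$-plane of the shape $\{S>0,\ \sqrt2<R<\sqrt2+C'S^2\}$ (with $C'$ fixed by $c$ and $\epsilon_0$) the pulled-back family is the desired $i_{S,R}$, with $C=10\sqrt c$.

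\emph{Expected main obstacle.} The difficulty is not geometric — the two nontrivial embeddings are imported wholesale from Lemmas~\ref{pp:10} and~\ref{pp:11} — but lies in verifying that each elementary operation (conjugation by a dilation, product with an identity, composition, reparametrisation) preserves the technical notion of smooth family of Definitions~\ref{defi:smooth}--\ref{defi:smooth1}, that the varying torus $\Sigma(\epsilon)$ enters this framework correctly (as it already does in the proofs of Lemmas~\ref{pp:10} and~\ref{pp:11}), and that the auxiliary data ($\mathcal{S}_\epsilon$, the region $D_\epsilon$, the cut-offs $\chi_\epsilon$) and the target radii track through the composition exactly as claimed; in particular one must confirm that $d_{\sqrt\epsilon}\circ i_\rho\circ d_{\sqrt\epsilon}^{-1}$ genuinely lands in $\Sigma(\epsilon)\times {\rm B}^{2(n-2)}(10S^2/\sqrt\epsilon)$. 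The one subtle analytic point is the change of variables $\epsilon=(R-\sqrt2)/c$, which manufactures the $\sqrt{R-\sqrt2}$ in the denominator — a square-root singularity at $R=\sqrt2$ analogous to the one in Lemma~\ref{pp:10} at $R=1/3$: one must check that the resulting domain is open and observe that the non-smoothness of $R\mapsto\sqrt{R-\sqrt2}$ at $R=\sqrt2$ is harmless because $R=\sqrt2$ is excluded.
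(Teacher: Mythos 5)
Your proposal is correct and follows essentially the same route as the paper: conjugate Guth's family from Lemma~\ref{pp:10} by the dilation $d_{\sqrt\epsilon}$ to land in $\Sigma(\epsilon)\times {\rm B}^{2(n-2)}(\cdot)$, compose with the family $I_\epsilon$ of Theorem~\ref{pp:11}, and reparametrise via $\epsilon=(R-\sqrt2)/c$; the paper does the same thing with parameters $(T,\epsilon)$ and the substitution $(S,R)=(\sqrt\epsilon\,T,\sqrt2+c\epsilon)$, which is just a different bookkeeping of the identical change of variables, and yields the same constants $C=10\sqrt c$ and $C'$ determined by $c$ (and, implicitly, $\epsilon_0$). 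The one small caveat you raise about the upper bound $\epsilon\leq\epsilon_0$ possibly trimming the parabolic domain for large $S$ is real but is equally present in the paper's own argument, so it is not a divergence between the two proofs.
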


\begin{proof}
  Consider the symplectic embedding
  \[i_{T} \colon {\rm B}^{2(n-1)}(T) \hookrightarrow \Sigma \times
  {\rm B}^{2(n-2)}(10T^2), \qquad T>{\textstyle\frac{1}{3}}
  \]
  given by Lemma~\ref{pp:10}, where $\Sigma=(\R^2\setminus \Z^2)/ \Z^2$
  is equipped with the standard quotient symplectic form. For
  $\epsilon>0$, let $\tau_{\sqrt\epsilon}:\R^{2(n-1)}\to\R^{2(n-1)}$
  be the dilation $\tau_{\sqrt{\epsilon}}(x)=\sqrt\epsilon x$. The
  corresponding quotient map $\bar\tau_{\sqrt\epsilon}$ maps
  $\Sigma\times \R^{2(n-2)}$ to $\Sigma(\epsilon) \times \R^{2(n-2)}$.
  
  The map $\bar\tau_{\sqrt\epsilon}\circ i_T \circ
  (\tau_{\sqrt\epsilon})^{-1}$ is a symplectic embedding of
  $\tau_{\sqrt\epsilon}({\rm B}^{2(n-1)}(T))={\rm
    B}^{2(n-1)}(\sqrt\epsilon T)$ into
 $$
 \bar\tau_{\sqrt\epsilon}(\Sigma \times {\rm B}^{2(n-2)}(10T^2)) =
  \Sigma(\epsilon) \times {\rm B}^{2(n-2)}(10\sqrt\epsilon T^2)). 
  $$
  Of
  course, as $T>\frac{1}{3}$ and $\epsilon>0$ vary, the corresponding
  family of embeddings is smooth. By composing with the embeddings
  given by Theorem \ref{pp:11}, we obtain a smooth family of
  symplectic embeddings~:
  \begin{gather*}
    {\rm B}^2(1) \times {\rm B}^{2(n-1)}(\sqrt\epsilon T)
    \hookrightarrow {\rm B}^2(\sqrt 2 + c\epsilon) \times {\rm
      B}^2(\sqrt 2) \times
    \textup{B}^{2(n-2)}(10\sqrt\epsilon T^2),\\
    T>1/3, \quad \epsilon>0.
  \end{gather*}
  The conclusion of the theorem is obtained by the smooth change of
  parameters $(S,R):=(\sqrt\epsilon T,\sqrt 2 + c\epsilon)$, whose
  image is the domain given by~\eqref{equ:domain}, with $C'=9c$. This
  change gives the constant $C=10\sqrt{c}$.
\end{proof}

\section{\textcolor{black}{Proof of Theorem~\ref{answers}}} \label{sec:proof}

Hind and Kerman proved \cite[Theorem 1.5]{HiKe2009} that for any $0 <
R_1<\sqrt{2}$ and any $R_2\geq R_1$ there are no symplectic embeddings
of ${\rm B}^2(1) \times {\rm B}^{2(n-1)}(S) $ into ${\rm B}^2(R_1)
\times {\rm B}^2(R_2) \times \mathbb{R}^{2(n-2)}$ when $S$ is
sufficiently large. Therefore, in order to prove
Theorem~\ref{answers}, it is sufficient to show that ${\rm B}^2(1)
\times \R^{2(n-1)} $ symplectically embeds into ${\rm
  B}^2(\sqrt2) \times {\rm B}^2(\sqrt2) \times \mathbb{R}^{2(n-2)}$.

By Theorem \ref{kh2} there exist constants $C,C'>0$ and a smooth family
of symplectic embeddings $ i_{S,\,R} \colon {\rm B}^2(1) \times {\rm
  B}^{2(n-1)}(S) \hookrightarrow {\rm B}^2(R) \times {\rm B}^2(\sqrt
2) \times \textup{B}^{2(n-2)}({\textstyle\frac{C S^2}{\sqrt{R-\sqrt
      2}}}), $ where $(S,R)$ vary in the set $ A $ of points $(S,R)\in
\R^2$ such that $S>0$ and $\sqrt 2 < R < \sqrt 2 + C'S^2$.  Let
$j_{R,S}$ be the symplectic rescaling~:
$$
{\rm B}^2(\sqrt{2}/R) \times {\rm B}^{2(n-1)}(\sqrt{2}S/R)
\hookrightarrow {\rm B}^2(\sqrt{2}) \times {\rm B}^2(2/R) \times
\textup{B}^{2(n-2)}({\textstyle\frac{C \sqrt{2}S^2}{R\sqrt{R-\sqrt 2}}}),
  $$
  given by $x \mapsto \sqrt{2}/R\,\, i_{S,R}(Rx/\sqrt{2})$.  The
  family $(j_{R,S})_{(R,S) \in A}$ is again a smooth family of
  symplectic embeddings.
  
Consider the smooth subfamily 
 \begin{eqnarray} \label{phifamily} \phi_{\epsilon}:=j_{S,R},
    \,\,\,\, \textup{with}\,\,\,\,\,
    S:=\frac{1}{\epsilon(1-\epsilon)},\,\,\,\,\,\,\,\,\,\,\,
    R:=\frac{\sqrt{2}}{1-\epsilon}.
  \end{eqnarray}
A computation shows that $(R,S)\in A$ as long as
\[
\epsilon^3(1-\epsilon)<\frac{C'}{\sqrt 2},
\]
which holds if $\epsilon<\epsilon_0$ and $\epsilon_0<1$ is small
enough; hence the family (\ref{phifamily}) is well defined, gives symplectic
embeddings from $ {\rm B}^2(1-\epsilon) \times {\rm
  B}^{2(n-1)}(1/\epsilon)$ to ${\rm B}^2(\sqrt{2}) \times {\rm
  B}^2(\sqrt{2}(1-\epsilon)) \times
\textup{B}^{2(n-2)}(\rho(\epsilon))$ with
\[
\rho(\epsilon) := \frac{2^{-1/4}C}{\sqrt{\epsilon^5(1-\epsilon)}}.
\]
Of course, such a function $\rho \colon (0,\epsilon_0) \to (0,\infty)$ is
continuous and
\begin{align*}
  \phi_{\epsilon}\left({\rm B}^2(1-\epsilon) \times {\rm
    B}^{2(n-1)}(1/\epsilon)\right) & \subset  {\rm B}^2(\sqrt{2}) \times
  {\rm B}^2(\sqrt{2}) \times \textup{B}^{2(n-2)}(\rho(\epsilon)) \\
  & \subset {\rm B}^{2n}(2\sqrt{2}+\rho(\epsilon)).
\end{align*}
Thus, in view of Remark \ref{newest} we may apply Theorem \ref{cor} to
the family of symplectic embeddings (\ref{phifamily}) with target
manifold $M={\rm B}^2(\sqrt{2}) \times {\rm B}^2(\sqrt{2}) \times
\R^{2(n-2)}$ as in Definition \ref{defi:smooth}.  In this way we get a
symplectic embedding $ j \colon {\rm B}^2(1) \times \R^{2(n-1)}
\hookrightarrow {\rm B}^2(\sqrt{2}) \times {\rm B}^2(\sqrt{2}) \times
\mathbb{R}^{2(n-2)}, $ as desired, thus proving Theorem~\ref{answers}.

\vspace{1mm}

{\small \emph{Acknowledgements}. We are thankful to Lev Buhovski for
  many fruitful discussions which have been important for the paper.
  We thank also Leonid Polterovich for helpful discussions concerning
  Moser's theorem.  We thank Helmut Hofer for helpful comments on the
  introduction to the paper.  AP learned of the problem treated in this paper in
  a lecture by Helmut Hofer in the Winter of 2011 at Princeton
  University, and he is grateful to him for fruitful discussions and
  encouragement.  AP was partly supported by NSF Grant DMS-0635607 and
  an NSF CAREER Award.  VNS is partially supported by the Institut
  Universitaire de France, the Lebesgue Center (ANR Labex LEBESGUE),
  and the ANR NOSEVOL grant.  He gratefully acknowledges the
  hospitality of the IAS. }

{\small
  \noindent
  \\
  {\bf {\'A}lvaro Pelayo} \\
  School of Mathematics \\
  Institute for Advanced Study\\
  Einstein Drive,   Princeton, NJ 08540 USA \\
  \\
  \noindent
  Washington University,  Mathematics Department \\
  One Brookings Drive, Campus Box 1146 \\
  St Louis, MO 63130-4899, USA.\\
  {\em E\--mail}: \texttt{apelayo@math.wustl.edu} \\
   {\em E\--mail}: \texttt{apelayo@math.ias.edu} 
  \noindent
  \\
  \\
  \noindent
  {\bf San V\~u Ng\d oc} \\
  Institut Universitaire de France
  \\
  \\
  Institut de Recherches Math\'ematiques de Rennes\\
  Universit\'e de Rennes 1, Campus de Beaulieu\\ F-35042 Rennes cedex, France\\
  {\em E-mail:} \texttt{san.vu-ngoc@univ-rennes1.fr}\\

\end{document}